\documentclass[11pt,reqno]{amsart}%
\usepackage{graphicx}
\usepackage{amsmath}
\usepackage{mathtools, amsfonts, amssymb, amsthm}
\usepackage{mathrsfs}
\usepackage{enumerate}
\usepackage{algorithmic}
\usepackage{boxedminipage}
\usepackage{color}
\usepackage{cite}
\usepackage{url}
\usepackage[margin=1in]{geometry}%
\usepackage{tikz}
\setcounter{MaxMatrixCols}{30}
\usepackage{multirow}
\usepackage{array}

\usetikzlibrary{arrows,shapes,chains}
\providecommand{\U}[1]{\protect\rule{.1in}{.1in}}

\newtheorem{theorem}{Theorem}[section]
\newtheorem{proposition}[theorem]{Proposition}
\newtheorem{lemma}[theorem]{Lemma}

\theoremstyle{remark}
\newtheorem{example}[theorem]{Example}

\makeatletter
\newcommand{\thickhline}{%
    \noalign {\ifnum 0=`}\fi \hrule height 1pt
    \futurelet \reserved@a \@xhline
}
\newcolumntype{"}{@{\hskip\tabcolsep\vrule width 1pt\hskip\tabcolsep}}
\makeatother

\begin{document}
\title{Best Nonnegative Rank-One Approximations of Tensors}

\author{Shenglong Hu}
\address{Department of Mathematics, School of Science, Hangzhou Dianzi University, Hangzhou 310018, China.}
\email{shenglonghu@hdu.edu.cn}

\author{Defeng Sun}
\address{Department of Applied Mathematics, The Hong Kong Polytechnic University, Hung Hom, Hong Kong.}
\email{defeng.sun@polyu.edu.hk}

\author{Kim-Chuan Toh}
\address{Department of Mathematics, and Institute of Operations Research and Analytics, National University of Singapore, 10 Lower Kent Ridge Road, Singapore.}
\email{mattohkc@nus.edu.sg}

\begin{abstract}
In this paper, we study the polynomial optimization problem of multi-forms over the intersection of the multi-spheres and the nonnegative orthants.
This class of problems is NP-hard in general, and includes the problem of finding the best nonnegative rank-one approximation of a given tensor. A Positivstellensatz is given for this class of polynomial optimization problems, based on which a globally convergent hierarchy of doubly nonnegative (DNN) relaxations is proposed. A (zero-th order) DNN relaxation method is applied to solve these problems, resulting in linear matrix optimization problems under both the positive semidefinite and nonnegative conic constraints. A worst case approximation bound is given for this relaxation method. Then, the recent solver SDPNAL+ is adopted to solve this class of matrix optimization problems. Typically, the DNN relaxations are tight, and hence the best nonnegative rank-one approximation of a tensor can be revealed 
frequently.
 Extensive numerical experiments show that this approach is quite promising.
\end{abstract}
\keywords{Tensor, nonnegative rank-1 approximation, polynomial, multi-forms, doubly nonnegative semidefinite program, doubly nonnegative relaxation method}
\subjclass[2010]{15A18; 15A42; 15A69; 90C22}
\maketitle

\section{Introduction}\label{sec:introduction}
Nonnegative factorizations of data observations are prevalent in data analysis, which have been popularized to an unprecedented level since the works 
of Paatero and Tapper \cite{PT-94}, and Lee and Seung \cite{LS-99}. In many applications, data are naturally represented by the third order or higher order tensors (a.k.a. hypermatrices). For example, a color image is stored 
 digitally
as a third order nonnegative tensor comprised of three nonnegative matrices, representing the red, green and blue pixels, and therefore a set of such images or a video is actually a fourth order nonnegative tensor. In the literature, however, these fourth order tensors are
typically flattened into matrices 
before data analysis is
applied \cite{PT-94,LS-99,Cic-Zdu-Pha-Ama:non,HPS-05}. As we can see,
the intrinsic structures of an image or a video are destroyed
after the flattening. Therefore, direct treatments of tensors are necessary, and
correspondingly
nonnegative factorizations of higher dimensional data are needed. As a result, tensor counterparts of the nonnegative matrix factorizations have 
become a new frontier
in this area \cite{SH-05,WW-01,Roy-Thi-Com:non,Paa:non,Lev:und,Kol-Bad:sur,All-Rho-Stu-Zwi:non,De-De-Van:sin}. As expected, nonnegative tensor factorizations have their own advantages over the traditional nonnegative matrix factorizations, see for examples \cite{HPS-05,SL-01} and references therein.

Nonnegative tensor factorizations have 
found
diverse applications, such as latent class models in statistics, spectroscopy, sparse image coding in computer vision, sound source separation, and pattern recognition, etc., see \cite{SH-05,HPS-05,FCC-05,Cic-Zdu-Pha-Ama:non} and references therein. Several methods
have been proposed for computing nonnegative tensor factorizations, see \cite{Bro-De:non,Coh-Rot:non,Fri-Hat:non,Lim-Com:non,Paa:non,Sha-Zas-Haz:sym,Zho-Cic-Zha-Xie:tuc,WW-01,Law-Han:lea} and references therein. Due to errors in measurements of the data collected or
simply because of inattainability, the problem of approximating a given tensor by a nonnegative tensor factorization occurs more often in practice than the problem
of finding the exact factorization of a given tensor.
Existence and uniqueness of nonnegative tensor factorizations are well studied in \cite{Lim-Com:non,Qi-Com-Lim:non}.
For a given tensor, a classical method to compute a nonnegative tensor factorization/approximation is by \textit{multiple best nonnegative rank-one approximations}, proposed by Shashua and Hazan \cite{SH-05}. The principle is alternatively splitting/approximating the given tensor by several (nonnegative) ones and approximating each (nonnegative) tensor by a best nonnegative rank-one tensor.
Therefore, in this framework, \textit{finding the best nonnegative rank-one approximation of a given tensor} is of crucial importance in nonnegative tensor factorizations/approximations.
This problem is also the cornerstone of the heuristic methods based on greedy rank-one downdating for nonnegative factorizations \cite{AG-06,BGV-08,BIB-03,G-06}. 

This article will focus on the problem of computing the best nonnegative rank-one approximation of a given tensor from the perspective of mathematical optimization. The problem will be formulated as a polynomial minimization problem over the intersection of the multi-sphere and the nonnegative orthant. With this formulation, the study can also be applied to the problem of testing the copositivity for a homogeneous polynomial, which is important in the completely positive programming \cite{PVZ-14}.

A negative
aspect from the computational complexity point of view is that the
 problem under consideration is NP-hard in general \cite{MK-87,DG-14,Has:ran,Vav:non}. Thus, no algorithm with polynomial complexity exists unless P=NP. Consequently, in practical applications, approximation or relaxation methods are
 employed
 to solve this problem. In this article, instead of adopting the traditional sums of squares (SOS) relaxation methods for a polynomial optimization problem
 (cf.\ \cite{L-01,PS-03,N-12,N-14,NDS-06}), we will introduction a \textit{doubly nonnegative (DNN) relaxation method} to solve this problem. DNN relaxation methods will provide tighter approximation results, since the cone of SOS polynomials in strictly contained in the cone of polynomials which can be written as sums of SOS polynomials and polynomials with nonnegative coefficients.
While the standard SOS relaxations of a polynomial optimization problem will
give rise to standard SDP problems with variables in the SDP cones, the DNN relaxation method
will give rise to DNNSDP problems whose variables are constrained to be in the
SDP cones and the cones of nonnegative matrices, in addition to linear equality
constraints. It has been well recognized that solving the DNNSDP problems by primal-dual
interior-point methods as implemented in popular solvers such as Mosek, SDPT3 \cite{SDPT3}, or SeDuMi \cite{SDM}
is computationally much more challenging than solving the standard SDP counterparts.
Fortunately, with the recent advances on augmented Lagrangian based methods
for solving SDP problems with bound constraints \cite{STY-15,YST-15},
we have reached a stage where solving the DNNSDP problems are computationally
not much more expensive than the standard SDP counterparts.
In this paper, we will employ the Newton-CG augmented Lagrangian method
implemented in the solver SDPNAL+ \cite{YST-15} to solve the DNNSDP problems
arising from best nonnegative rank-one tensor approximation problems.
Extensive numerical computations will show that our new approach is quite promising.

The remaining parts of this article are organized as follows. Preliminaries will be given in Section~\ref{sec:preliminary}, in which nonnegative tensor approximations and in particular the best nonnegative rank-one approximation problems will be presented in Section~\ref{sec:non-app}, and the problem of testing the copositivity of a tensor will be given in Section~\ref{sec:cop}. Both the problems in Section~\ref{sec:preliminary} will be formulated as minimizing a multi-form over the intersection of the multi-sphere and the nonnegative orthants in Section~\ref{sec:homo}. In ensuing section, basic properties of this polynomial optimization problem will be investigated, including a Positivstellensatz for this problem (cf.\ Section~\ref{sec:positive}), the DNN relaxation (cf.\ Section~\ref{sec:sos}), a worst case approximation bound (cf.\ Section~\ref{sec:bound}), and the
extraction of a nonnegative rank-one tensor from a solution of the DNN problem (cf.\ Sections~\ref{sec:solution-even} and \ref{sec:solution-odd}). Numerical computations will be presented in Section~\ref{sec:numerical}, in which extensive examples on best nonnegative rank-one approximations and examples on testing the copositivity of a tensor will be given. Some conclusions will be given in the last section.

\section{Preliminaries}\label{sec:preliminary}
In this article, tensors will be considered in the most general setting.
Given positive integers $n_1,\dots,n_r$, a tensor $\mathcal A\in\mathbb R^{n_1}\otimes\dots\otimes\mathbb R^{n_r}$ is a collection of $n_1\cdots n_r$ scalars $a_{i_1\dots i_r}$, termed the entries of $\mathcal A$, for all $i_j\in\{1,\dots,n_j\}$ and $j\in\{1,\dots,r\}$. If $n_1=\dots=n_r=n$, $\mathbb R^{n_1}\otimes\dots\otimes\mathbb R^{n_r}$ is abbreviated as $\otimes^r\mathbb R^{n}$.
Given positive integers $p$, $\alpha_1,\dots,\alpha_p$, $n_1,\dots,n_p$, we denote by $\operatorname{Sym}(\otimes^{\alpha_i}\mathbb R^{n_i})$ the symmetric  subspace of the tensor space $\otimes^{\alpha_i}\mathbb R^{n_i}$, consisting of real symmetric tensors with order $\alpha_i$ and dimension $n_i$, and $\operatorname{Sym}(\otimes^{\alpha_1}\mathbb R^{n_1})\otimes\dots\otimes\operatorname{Sym}(\otimes^{\alpha_p}\mathbb R^{n_p})$ the tensor space with $p$ symmetric factors. Note that when $p=1$, the tensor space is the usual space of symmetric tensors; and when $\alpha_1=\dots=\alpha_p=1$, the tensor space is the usual space of non-symmetric tensors.
A tensor $\mathcal A\in\operatorname{Sym}(\otimes^{\alpha_1}\mathbb R^{n_1})\otimes\dots\otimes\operatorname{Sym}(\otimes^{\alpha_p}\mathbb R^{n_p})$ is usually referred
to as a \textit{partially symmetric} tensor, which appears in many applications. A symmetric rank-one tensor in $\operatorname{Sym}(\otimes^{\alpha_i}\mathbb R^{n_i})$ is an element $\big(\mathbf x^{(i)}\big)^{\otimes \alpha_i}$ for some vector $\mathbf x^{(i)}\in\mathbb R^{n_i}$, where $\big(\mathbf x^{(i)}\big)^{\otimes \alpha_i}$ is a short hand for
\[
\underbrace{\mathbf x^{(i)}\otimes\dots\otimes\mathbf x^{(i)}}_{\alpha_i\ \text{copies}}.
\]
Therefore, a rank-one tensor in $\operatorname{Sym}(\otimes^{\alpha_1}\mathbb R^{n_1})\otimes\dots\otimes\operatorname{Sym}(\otimes^{\alpha_p}\mathbb R^{n_p})$ is of the form $\mathbf x^{\alpha}:=\big(\mathbf x^{(1)}\big)^{\otimes\alpha_1}\otimes\dots\otimes\big(\mathbf x^{(p)}\big)^{\otimes\alpha_p}$ for some vectors $\mathbf x^{(i)}\in\mathbb R^{n_i}$, $i=1,\ldots,p.$

As an Euclidean space, the inner product $\langle\mathcal A,\mathcal B\rangle$ of two tensors $\mathcal A, \mathcal B\in\mathbb R^{n_1}\otimes\dots\otimes\mathbb R^{n_r}$ is defined as
\[
\langle\mathcal A,\mathcal B\rangle:=\sum_{i_1=1}^{n_1}\dots\sum_{i_r=1}^{n_r}a_{i_1\dots i_r}b_{i_1\dots i_r}.
\]
The Hilbert-Schmidt norm $\|\mathcal A\|$ is then defined as
\[
\|\mathcal A\|:=\sqrt{\langle\mathcal A,\mathcal A\rangle}.
\]
We refer the readers to \cite{Lim:hyp} and references therein for basic notions on tensors.
\subsection{Nonnegative tensor approximation}\label{sec:non-app}
In the context of computer vision, chemometrics, statistics, and spectral intensity, the multi-way (tensor) data often cannot take negative values. Therefore, one expects to approximate as much as possible the observed data with a summation of rank-one nonnegative tensors
\begin{equation}\label{nonnegative-factorization}
\mathcal A\approx \sum_{i=1}^r\lambda_i\mathbf x^{\otimes\alpha}_i\ \lambda_i\geq 0,\ \mathbf x_i\geq \mathbf 0
\end{equation}
for some nonnegative integer $r$,
with
\[
\mathbf x_i:=(\mathbf x^{(1)}_i,\dots,\mathbf x^{(p)}_i)\in\mathbb R^{n_1}\times\dots\times\mathbb R^{n_p},
\]
and
\[
\mathbf x^{\otimes\alpha}_i:=\big(\mathbf x^{(1)}_i\big)^{\otimes\alpha_1}\otimes\dots\otimes\big(\mathbf x^{(p)}_i\big)^{\otimes\alpha_p}.
\]
For a given continuous distance measure $\phi$ over the tensor space, we can formulate problem \eqref{nonnegative-factorization} as
\begin{equation}\label{nonnegative-optimization}
\min\bigg\{\phi(\mathcal A,\sum_{i=1}^r\lambda_i\mathbf x^{\otimes\alpha}_i) : \lambda_i\geq 0,\ \mathbf x_i\geq \mathbf 0\bigg\}.
\end{equation}
In most cases, $\phi$ is chosen as the Hilbert-Schmidt norm  distance, i.e., $\phi(\mathcal A,\mathcal B):=\|\mathcal A-\mathcal B\|$.
Problem \eqref{nonnegative-optimization} is well-defined for each $r\in\mathbb N$, while NP-hard in most cases. Moreover, numerical difficulty arises for problem \eqref{nonnegative-factorization} when we do not know prior $r$, and even if we are luck enough to know the exact $r$, it is still very difficulty to solve \eqref{nonnegative-optimization}.
Thus, one procedure to solve \eqref{nonnegative-factorization} is by multiple best nonnegative rank-one approximations and another is by successive best nonnegative rank-one approximations.

Therefore, we focus on the problem \eqref{nonnegative-optimization} with fixed $r=1$ in this article, i.e., \textit{the best nonnegative rank-one approximation} of the tensor $\mathcal A$. We will see that this problem is already hard,  both theoretically and numerically. The computational complexity is NP-hard in general.

With the common choice of $\phi$ as the Hilbert-Schmidt norm distance, problem \eqref{nonnegative-optimization} becomes
\begin{equation}\label{nonnegative-rank-one}
\begin{array}{rl} \min_{\lambda,\mathbf x}&\|\mathcal A-\lambda\mathbf x^{\otimes \alpha}\|^2\\[5pt]
\text{s.t.}&\lambda\geq 0,\ \langle\mathbf x^{(i)},\mathbf x^{(i)}\rangle=1, \ \mathbf x^{(i)}\geq \mathbf 0,\ \text{for all }i=1,\dots,p, \end{array}
\end{equation}
where $\mathbf x:=(\mathbf x^{(1)},\dots,\mathbf x^{(p)})$.
It is easy to see that \eqref{nonnegative-rank-one} always has an optimal solution $(\lambda,\mathbf x)$ with
\begin{equation}\label{lambda-value}
\lambda := \begin{cases}\langle\mathcal A,\mathbf x^{\otimes \alpha}\rangle,& \text{whenever }\langle\mathcal A,\mathbf x^{\otimes \alpha}\rangle>0,\\0,&\text{otherwise},\end{cases}
\end{equation}
and in both cases
\[
\|\mathcal A-\lambda\mathbf x^{\otimes \alpha}\|^2=\|\mathcal A\|^2 - \lambda^2.
\]
Therefore, \eqref{nonnegative-rank-one} is equivalent to
\begin{equation}\label{minimal-reformulation}
\begin{array}{rl} \min&\langle-\mathcal A,\mathbf x^{\otimes \alpha}\rangle
\\[5pt]
\text{s.t.}& \langle\mathbf x^{(i)},\mathbf x^{(i)}\rangle=1, \ \mathbf x^{(i)}\geq \mathbf 0,\ \text{for all }i=1,\dots,p \end{array}
\end{equation}
in the sense that
\begin{enumerate}
\item if the optimal value of \eqref{minimal-reformulation} is nonnegative, then the zero tensor is the best nonnegative rank-one approximation of $\mathcal A$,
\item if the optimal value $\lambda$ of \eqref{minimal-reformulation} is negative with optimal solution $\mathbf x_*$, then $-\lambda\mathbf x^{\otimes\alpha}_*$ is the best nonnegative rank-one approximation of $\mathcal A$.
\end{enumerate}
\subsection{Copositivitiy of tensors}\label{sec:cop}
A  tensor $\mathcal A\in\operatorname{Sym}(\otimes^{\alpha_1}\mathbb R^{n_1})\otimes\dots\otimes\operatorname{Sym}(\otimes^{\alpha_p}\mathbb R^{n_p})$ is called \textit{copositive}, if
\[
\langle\mathcal A,\mathbf x^{\otimes \alpha}\rangle\geq 0\ \text{for all }\mathbf x\in\mathbb R^{n_1}_+\times\dots\times\mathbb R^{n_p}_+.
\]
The copositivity of a tensor is a generalized notion of both the nonnegativity of a matrix and the copositivity of a symmetric matrix. When $p=1$ and $\alpha_1=2$, it reduces to the copositivity of a symmetric matrix; and when $\alpha_1=\dots=\alpha_p=1$, it reduces to the nonnegativity of a tensor. The problem of deciding the copositivity of a tensor is therefore co-NP-hard \cite{DG-14,MK-87}. When $p=1$, discussions on copositive tensors can be found in \cite{Qi:cop,NYZ-18} and references therein.

Testing the copositivity of a tensor can also be formulated as a polynomial optimization problem as in \eqref{minimal-reformulation}.
Indeed, a tensor $\mathcal A$ is copositive if and only if the optimal value of
\begin{equation}\label{minimal-copositive}
\begin{array}{rl} \min&\langle\mathcal A,\mathbf x^{\otimes \alpha}\rangle\\ \text{s.t.}& \langle\mathbf x^{(i)},\mathbf x^{(i)}\rangle=1, \ \mathbf x^{(i)}\geq \mathbf 0,\ \text{for all }i=1,\dots,p \end{array}
\end{equation}
is nonnegative.

\section{Homogeneous Polynomials}\label{sec:homo}
Since both the problem of finding the best nonnegative rank-one approximation of a tensor and the copositivity certification of a tensor can be equivalently reformulated as \eqref{minimal-reformulation} (or \eqref{minimal-copositive}), we
focus on this polynomial optimization problem in this section.

Let $\mathbf x:=(\mathbf x^{(1)},\dots,\mathbf x^{(p)})\in\mathbb R^{n_1}\times\dots\times\mathbb R^{n_p}$. A polynomial $f(\mathbf x)$ is \textit{multi-homogeneous} or \textit{multi-form}, if each monomial of $f$ has the same degree with respect to each group variables $\mathbf x^{(i)}$ for all $i\in\{1,\dots,p\}$.
We consider the following optimization problem
\begin{equation}\label{nonnegative-problem}
\begin{array}{rrl}f_{\min}:=& \min&f(\mathbf x^{(1)},\dots,\mathbf x^{(p)})\\ &\text{s.t.}& \|\mathbf x^{(i)}\|=1, \ \mathbf x^{(i)}\geq \mathbf 0, \ \mathbf x^{(i)}\in\mathbb R^{n_i},\ i=1,\dots,p,\end{array}
\end{equation}
where $f(\mathbf x^{(1)},\dots,\mathbf x^{(p)})\in\mathbb R[\mathbf x]$ is a multi-form of even degree $d_i=2\tau_i$ for some $\tau_i\geq 0$ with respect to each $\mathbf x^{(i)}$  for all $i\in\{1,\dots,p\}$. 
Problem~\eqref{nonnegative-problem} covers all instances of minimizing a multi-form over the intersection of the multi-sphere and the nonnegative orthants, since the cases with odd $d_i$'s can be equivalently formulated into \eqref{nonnegative-problem} as in Section~\ref{sec:odd}. Polynomial optimization over the multi-sphere is one research direction in recent years, see \cite{LNQY-09,N-12,Nie-Wan:non,NZ-16} and references therein.
Moreover, in \cite{LNQY-09} a biquadratic optimization over the joint sphere (multi-sphere with $p=2$) with one group variables being nonnegative is discussed as well.

For easy references, in the following, we will denote the $n-1$-dimensional sphere in $\mathbb R^n$ as $\mathbb S^{n-1}$, i.e., $\mathbb S^{n-1}:=\{\mathbf x\in\mathbb R^n\colon \mathbf x^\mathsf{T}\mathbf x=1\}$. The nonnegative part of the $n-1$-dimensional sphere is denoted by $\mathbb S_+^{n-1}$, i.e., $\mathbb S_+^{n-1}:=\{\mathbf x\in\mathbb R_+^n\colon \mathbf x^\mathsf{T}\mathbf x=1\}$. Thus, the feasible set of \eqref{nonnegative-problem} can be called as the \textit{nonnegative multi-sphere}.

\subsection{Odd order case}\label{sec:odd}
If $f(\mathbf x^{(1)},\dots,\mathbf x^{(p)})$ is of odd degree $d>0$ for $\mathbf x^{(1)}$ (without loss of generality), then we introduce a variable $t$ and let
\[
\tilde f(\tilde{\mathbf x}^{(1)},\mathbf x^{(2)},\dots,\mathbf x^{(p)}):=tf(\mathbf x^{(1)},\dots,\mathbf x^{(p)})
\]
with $\tilde{\mathbf x}^{(1)}=((\mathbf x^{(1)})^\mathsf{T},t)^\mathsf{T}$.
It can be shown that
\[
 f_{\min}=\sqrt{\frac{(d+1)^{d+1}}{d^d}}\tilde f_{\min},
\]
since
\[
\max\{t\alpha^d : \alpha^2+t^2=1\}=\sqrt{\frac{d^d}{(d+1)^{d+1}}}
\]
with a positive optimal $t$.

If the degree of $f$ for $\mathbf x^{(1)}$ is one, we can construct
\[
g(\mathbf x^{(2)},\dots,\mathbf x^{(p)}):=\sum_{j=1}^{n_1}\big(f(\mathbf e^{(1)}_j,\mathbf x^{(2)},\dots,\mathbf x^{(p)})\big)^2,
\]
where $\mathbf e^{(1)}_j\in\mathbb R^{n_1}$ is the $j$th standard basis vector. In some cases, \eqref{nonnegative-problem} can be solved via minimizing $g$ over the nonnegative multi-sphere, i.e.,
\begin{equation}\label{eq:nonnegative-linear}
\min\{g(\mathbf x^{(2)},\dots,\mathbf x^{(p)})\colon \|\mathbf x^{(i)}\|=1, \ \mathbf x^{(i)}\geq \mathbf 0, \ \mathbf x^{(i)}\in\mathbb R^{n_i},\ 2=1,\dots,p\}.
\end{equation}
Actually, if $(\mathbf x^{(2)},\dots,\mathbf x^{(p)})$ is an optimal solution of \eqref{eq:nonnegative-linear} with positive optimal value and
$f(\mathbf e^{(1)}_j,\mathbf x^{(2)},\dots,\mathbf x^{(p)})$'s are all nonpositive, then we can construct a solution for \eqref{nonnegative-problem} from a solution for \eqref{eq:nonnegative-linear}. Indeed, one optimal solution of \eqref{nonnegative-problem} is given by
\[
(\mathbf x^{(1)}:=-\frac{\mathbf u_{-}}{\|\mathbf u\|},\mathbf x^{(2)},\dots,\mathbf x^{(p)})
\]
with
\[
\mathbf u:=(f(\mathbf e^{(1)}_1,\mathbf x^{(2)},\dots,\mathbf x^{(p)}),\dots,f(\mathbf e^{(1)}_{n_1},\mathbf x^{(2)},\dots,\mathbf x^{(p)}))^\mathsf{T}
\]
and $(\mathbf u_-)_i:=\min\{0,u_i\}$. This is based on the fact that
\[
\min\{\mathbf x^\mathsf{T}\mathbf y : \|\mathbf y\|=1, \ \mathbf y\geq \mathbf 0\}=-\|\mathbf x_-\|
\]
with the optimizer $\mathbf y^*:=-\frac{\mathbf x_-}{\|\mathbf x_-\|}$ when $\mathbf x_-\neq\mathbf 0$. Note that the number of variables is reduced from \eqref{nonnegative-problem} to \eqref{eq:nonnegative-linear}.

Before proceeding to the computation of \eqref{nonnegative-problem}, we state the computational complexity of it.

\subsection{NP-hardness}\label{sec:np-hard}
\begin{proposition}\label{prop:np-hard}
Problem~\eqref{nonnegative-problem} is NP-hard in general.
\end{proposition}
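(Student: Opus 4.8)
The plan is to establish NP-hardness by reducing a known NP-hard problem to an instance of~\eqref{nonnegative-problem}. The most natural choice is to reduce from the problem of deciding copositivity of a symmetric matrix (equivalently, testing nonnegativity of a quadratic form over the nonnegative orthant), which is classically NP-hard, as already cited in the excerpt via~\cite{MK-87,DG-14}. Since the paper has already observed in Section~\ref{sec:cop} that copositivity certification of a tensor reduces to deciding the sign of the optimal value of~\eqref{minimal-copositive}, and~\eqref{minimal-copositive} is precisely an instance of~\eqref{nonnegative-problem} (with a single group $p=1$, degree $d_1=2$, and $f(\mathbf x)=\langle\mathcal A,\mathbf x^{\otimes 2}\rangle=\mathbf x^\mathsf{T} A\mathbf x$), the reduction is essentially immediate at the level of problem formulations. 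The one subtlety is the even-degree requirement: copositivity gives a degree-$2$ form in a single group, which already satisfies $d_1=2=2\tau_1$ with $\tau_1=1$, so no odd-order workaround from Section~\ref{sec:odd} is needed.

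First I would recall the canonical hard instance: deciding whether a given symmetric matrix $A\in\operatorname{Sym}(\otimes^2\mathbb R^n)$ is copositive, i.e.\ whether $\mathbf x^\mathsf{T} A\mathbf x\geq 0$ for all $\mathbf x\in\mathbb R^n_+$, is co-NP-complete; this can be traced back to the NP-completeness of the maximum (independent set / clique) problem through the Motzkin--Straus formulation, which expresses the clique number of a graph via minimization of a quadratic form over the standard simplex. Next I would set $p=1$, $n_1=n$, $\alpha_1=2$, and take $f(\mathbf x):=\mathbf x^\mathsf{T} A\mathbf x$ in~\eqref{nonnegative-problem}. This $f$ is a multi-form of even degree $d_1=2$ in the single group variable $\mathbf x=\mathbf x^{(1)}$, so the instance is legitimately of the form~\eqref{nonnegative-problem}, with feasible set $\mathbb S^{n-1}_+$. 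Then I would invoke the equivalence already recorded in the excerpt: $A$ is copositive if and only if $f_{\min}\geq 0$, noting that minimizing over the unit sphere intersected with the orthant is equivalent to minimizing over the simplex up to positive scaling of the homogeneous quadratic, so the sign of the optimum is preserved.

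The core of the argument is that an algorithm solving~\eqref{nonnegative-problem} in polynomial time would, in particular, compute $f_{\min}$ for this instance and thereby decide copositivity of $A$ in polynomial time, forcing $\text{co-NP}\subseteq\text{P}$ and hence (by standard complexity theory) $\text{P}=\text{NP}$. I expect the main obstacle to be purely expository rather than mathematical: one must state precisely what ``solving~\eqref{nonnegative-problem}'' means as a decision problem (e.g.\ deciding whether $f_{\min}\geq 0$, or whether $f_{\min}\geq c$ for a given rational threshold $c$) so that the reduction is genuinely polynomial-time and the bit-complexity of the matrix entries is respected. A secondary point worth a sentence is that the reduction is trivially many-one and size-preserving, so the hardness is not an artifact of numerical precision. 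Since every ingredient---the hard source problem, the formulation as a single-group even-degree multi-form, and the copositivity-equivalence---is either classical or already established earlier in the paper, the proof should be short, essentially a one-line specialization followed by the citation of co-NP-hardness of matrix copositivity.
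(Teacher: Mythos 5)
Your proof is correct, but it takes a genuinely different route from the paper's. You reduce from matrix copositivity: with $p=1$, $d_1=2$, $f(\mathbf x)=\mathbf x^{\mathsf{T}}A\mathbf x$, the sign of $f_{\min}$ over $\mathbb S^{n-1}_+$ decides copositivity of $A$, whose co-NP-hardness you import as a black box from \cite{MK-87,DG-14}. The paper instead gives a direct reduction from the stability (clique) number of a graph: by the Motzkin--Straus theorem, $\max_{\mathbf x\in\Delta_n}\sum_{(i,j)\in E}x_ix_j$ determines $\alpha(G)$, and the substitution $x_i=y_i^2$ converts the simplex into the sphere, yielding the quartic instance $\max_{\|\mathbf y\|=1,\,\mathbf y\geq\mathbf 0}\sum_{(i,j)\in E}y_i^2y_j^2$ of \eqref{nonnegative-problem}, where the nonnegativity constraint comes for free because the objective involves only the squares $y_i^2$. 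The two arguments are cousins---the Motzkin--Straus identity is also what underlies the classical hardness of copositivity---but yours packages that step inside the citation, so it is shorter and moreover exhibits hardness already in the smallest even-degree case ($d_1=2$, a single group of variables), whereas the paper's reduction is self-contained at the level of the optimization step and produces a degree-$4$ instance. Your attention to formulating the decision version precisely (deciding $f_{\min}\geq 0$) and to the co-NP versus NP distinction is a point the paper glosses over; both proofs are Turing reductions, under which a polynomial-time algorithm for \eqref{nonnegative-problem} would imply $\mathrm{P}=\mathrm{NP}$ either way.
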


\begin{proof}
We will construct a subclass of \eqref{nonnegative-problem}, which is NP-hard.
Let $G=(V,E)$ be a simple graph with the set of vertices being $V=\{1,\dots,n\}$ and the set of edges being $E$. Let $\Delta_n\subset\mathbb R^n_+$ be the standard simplex. Then
\[
1-\frac{1}{\alpha(G)}=2\max_{\mathbf x\in \Delta_n}\sum_{(i,j)\in E}x_ix_j
\]
by the famous Motzkin-Straus theorem \cite{MS-65}, where $\alpha(G)$ is the stability number of $G$.  It is well known that computing $\alpha(G)$ is an NP-hard problem \cite{MK-87,GJ-80}.  On the other hand, we have that
\[
\max_{\mathbf x\in \Delta_n}\sum_{(i,j)\in E}x_ix_j=\max_{\|\mathbf y\|=1}\sum_{(i,j)\in E}y_i^2y_j^2=\max_{\|\mathbf y\|=1,\ \mathbf y\geq\mathbf 0}\sum_{(i,j)\in E}y_i^2y_j^2,
\]
where the second equality follows from the fact that in the objective function only squared $y_i^2$'s are involved. Immediately, the last optimization problem is of the
form given in \eqref{nonnegative-problem}. The required result then follows.
\end{proof}

A standard SOS relaxation can be applied to the polynomial optimization problem \eqref{nonnegative-problem}, see \cite{L-01}.
However, in order to reduce the size of the resulting SDP, we would like to combine the spherical constraints as follows.

The homogeneity property implies that \eqref{nonnegative-problem} is equivalent to
\begin{equation}\label{nonnegative-problem-homo}
\begin{array}{rrl}f_{\min}:=& \min&f(\mathbf x^{(1)},\dots,\mathbf x^{(p)})\\ &\text{s.t.}& \prod_{i=1}^p\|\mathbf x^{(i)}\|^{d_i}=1, \\ & & \mathbf x^{(i)}\geq \mathbf 0, \ \mathbf x^{(i)}\in\mathbb R^{n_i},\ i=1,\dots,p\end{array}
\end{equation}
in the sense that they have the same optimal objective value and we can get an optimal solution for one from the other.

\subsection{A Positivstellensatz}\label{sec:positive}
Testing the nonnegativity of a polynomial over a (compact) semialgebraic set is a very difficult problem \cite{BCR-98}. Thus, certifications of nonnegativity of a polynomial are foundations for polynomial optimization \cite{L-01}. In the literature, such certifications are called \textit{Positivstellensatz}. Of crucial importance are Putinar's Positivstellensatz \cite{P-93}, P\'olya's theorem \cite{P-28} and Reznick's theorem \cite{R-00}.

While Putinar's result is more general, and the theorems of
P\'olya and Reznick  are applicable only to homogeneous polynomials over the simplices and spheres respectively, the resulting SDP problems obtained from the latter two theorems have sizes that are about half
of those obtained by using Putinar's Positivstellensatz directly. Since the cost of solving SDP problems grow rapidly with the sizes of problems, P\'olya's theorem and Reznick's theorem are more important for homogeneous problems.

In this section, we will derive a Positivstellensatz for the optimization problem \eqref{nonnegative-problem-homo} by taking into account both the homogeneity structures of the objective function and constraints, as well as the nonnegativity constraints.

Let $g(\mathbf x):=\prod_{i=1}^p\|\mathbf x^{(i)}\|^{d_i}$ and $\mathcal F$ be the feasible set of  problem \eqref{nonnegative-problem-homo}.
Suppose that $\gamma:=f_{\min}$ is the optimal value of  \eqref{nonnegative-problem-homo}. It follows that
\[
f(\mathbf x)-\gamma g(\mathbf x)\geq 0\ \text{for all }\mathbf x\in\mathcal F.
\]
We then have
\[
f(\mathbf x)-\gamma g(\mathbf x)\geq 0\ \text{for all }\mathbf x\in\mathbb S_+^{n_1-1}\times\dots\times\mathbb S_+^{n_p-1},
\]
which is equivalent to
\[
f(\mathbf x)-\gamma g(\mathbf x)\geq 0\ \text{for all }\mathbf x\in\Delta_{n_1}\times\dots\times\Delta_{n_p},
\]
where $\Delta_{n_i}$ is the standard simplex in $\mathbb R^{n_i}$, i.e., $\Delta_{n_i}:=\{\mathbf x\in\mathbb R_+^{n_i}\colon\mathbf e^\mathsf{T}\mathbf x=1\}$.
In the following, we will discuss the positivity of a multi-form over the joint simplex.
The next proposition is the well known P\'olya theorem on positive polynomials over the simplex \cite{P-28}.
\begin{proposition}\label{prop:polya}
Let $h$ be a homogeneous polynomial and positive on the simplex $\Delta_n$. Then, there is a positive integer $N$ such that for all $r\geq N$, the polynomial
\[
(\mathbf e^\mathsf{T}\mathbf x)^rh(\mathbf x)
\]
has positive coefficients.
\end{proposition}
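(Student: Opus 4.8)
The plan is to prove this classical theorem of P\'olya by a direct expansion of the coefficients of $(\mathbf e^\mathsf{T}\mathbf x)^r h(\mathbf x)$ combined with a compactness argument. Let $\ell$ denote the degree of $h$ and write $h(\mathbf x)=\sum_{|\alpha|=\ell}a_\alpha \mathbf x^\alpha$ in multi-index notation, where $\mathbf x^\alpha=\prod_{i=1}^n x_i^{\alpha_i}$ and $|\alpha|=\sum_i\alpha_i$. Since $\Delta_n$ is compact and $h$ is continuous and strictly positive on it, $h$ attains a minimum $m:=\min_{\mathbf x\in\Delta_n}h(\mathbf x)>0$; this positive floor is what will ultimately force the coefficients to be positive.

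First I would compute the coefficients explicitly. Expanding $(\mathbf e^\mathsf{T}\mathbf x)^r=\sum_{|\beta|=r}\frac{r!}{\beta!}\mathbf x^\beta$ by the multinomial theorem and multiplying by $h$, the coefficient of $\mathbf x^\gamma$ (for $|\gamma|=\ell+r$) in $(\mathbf e^\mathsf{T}\mathbf x)^r h(\mathbf x)$ is
\[
C_\gamma=\sum_{\alpha\le\gamma,\ |\alpha|=\ell} a_\alpha\,\frac{r!}{(\gamma-\alpha)!}=\frac{r!}{\gamma!}\sum_{\alpha\le\gamma,\ |\alpha|=\ell}a_\alpha\prod_{i=1}^n\gamma_i(\gamma_i-1)\cdots(\gamma_i-\alpha_i+1),
\]
where the sum runs over multi-indices $\alpha\le\gamma$ (entrywise). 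Since the prefactor $r!/\gamma!$ is positive, the sign of $C_\gamma$ agrees with the sign of $S_\gamma:=\sum_{\alpha}a_\alpha\prod_i\gamma_i(\gamma_i-1)\cdots(\gamma_i-\alpha_i+1)$, where the terms with $\alpha\not\le\gamma$ vanish automatically through a zero falling-factorial factor.

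The key step is a normalization that turns $S_\gamma$ into an approximate evaluation of $h$. Writing $s:=|\gamma|=\ell+r$ and $\mathbf y:=\gamma/s$, so that $\mathbf y\in\Delta_n$, one has
\[
s^{-\ell}\prod_{i=1}^n\gamma_i(\gamma_i-1)\cdots(\gamma_i-\alpha_i+1)=\prod_{i=1}^n\prod_{k=0}^{\alpha_i-1}\Big(y_i-\tfrac{k}{s}\Big),
\]
and each inner factor differs from $y_i$ by at most $\ell/s$. Since $h$ has only finitely many monomials, all with bounded coefficients, and $\mathbf y$ ranges over the bounded set $\Delta_n$, this shows $s^{-\ell}S_\gamma\to\sum_\alpha a_\alpha\mathbf y^\alpha=h(\mathbf y)$ uniformly over all admissible $\gamma$ as $s\to\infty$. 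Because $h(\mathbf y)\ge m>0$ for every $\mathbf y\in\Delta_n$, there is $N$ such that for all $r\ge N$ one has $s^{-\ell}S_\gamma\ge m/2>0$ for every $\gamma$, whence $C_\gamma>0$ for all $\gamma$, which is the claim.

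I expect the only genuine obstacle to be making the uniform convergence in the last step precise: one must bound the difference between the product of falling factorials and $\gamma^\alpha$ uniformly in $\gamma$. This is exactly where the finiteness of the monomial support of $h$ and the compactness of $\Delta_n$ are used together, the former giving a uniform bound of the form $\ell/s$ on each perturbed factor and the latter supplying the uniform positive floor $m$.
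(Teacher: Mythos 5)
Your proof is correct. The paper states this proposition without proof, citing \cite{P-28}, but its proof of the multi-form generalization (Proposition~\ref{prop:positive}) uses exactly your argument---writing each coefficient of the product as a normalized falling-factorial sum (there packaged as the binomial-coefficient function $\phi(\kappa/\mathbf s,1/\mathbf s)$) that converges uniformly to the evaluation of the polynomial at the simplex points $\gamma/s$, and then invoking the positive minimum on the compact simplex---so your approach is essentially the same as the paper's.
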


Next, we will generalize Proposition~\ref{prop:polya} to multi-forms over the joint simplex. It will serve as a theoretical foundation for the DNN relaxation methods to be introduced later for \eqref{nonnegative-problem-homo}. The proof is in the spirit of P\'olya \cite{P-28}, see also \cite{HLP-52}.
\begin{proposition}\label{prop:positive}
Let $f$ be a multi-form of degree $d_i$ with respect to each $\mathbf x^{(i)}$ for $i=1,\dots,p$. If $f$ is positive on $\Delta_{n_1}\times\dots\times\Delta_{n_p}$, then
\[
\bigg[\prod_{i=1}^p(\mathbf e^\mathsf{T}\mathbf x^{(i)})^{r_i}\bigg]f(\mathbf x)
\]
is a polynomial with positive coefficients for all sufficiently large $r_i$ with $i\in\{1,\dots, p\}$.
\end{proposition}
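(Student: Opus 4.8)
The plan is to generalize P\'olya's original argument (Proposition~\ref{prop:polya}) directly to the multi-homogeneous setting, estimating each coefficient of the enlarged polynomial by evaluating $f$ at the barycentric point attached to the corresponding multi-monomial. Write $f(\mathbf x)=\sum_{\alpha}c_{\alpha}\prod_{i=1}^p(\mathbf x^{(i)})^{\alpha^{(i)}}$, where $\alpha=(\alpha^{(1)},\dots,\alpha^{(p)})$ ranges over the multi-exponents with $|\alpha^{(i)}|=d_i$. First I would expand each factor $(\mathbf e^\mathsf{T}\mathbf x^{(i)})^{r_i}$ by the multinomial theorem and read off the coefficient of an arbitrary multi-monomial $\prod_i(\mathbf x^{(i)})^{\beta^{(i)}}$, with $|\beta^{(i)}|=r_i+d_i=:m_i$, in the product $\bigl[\prod_i(\mathbf e^\mathsf{T}\mathbf x^{(i)})^{r_i}\bigr]f$. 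This coefficient equals $\sum_{\alpha}c_\alpha\prod_{i=1}^p r_i!\big/\prod_{j}(\beta^{(i)}_j-\alpha^{(i)}_j)!$, where terms with $\alpha^{(i)}\not\le\beta^{(i)}$ vanish by convention. Factoring out the strictly positive quantity $\prod_i r_i!\big/\prod_j\beta^{(i)}_j!$, the sign of the coefficient is that of $Q(\beta):=\sum_\alpha c_\alpha\prod_{i,j}\beta^{(i)}_j(\beta^{(i)}_j-1)\cdots(\beta^{(i)}_j-\alpha^{(i)}_j+1)$.

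The key step is an asymptotic evaluation of $Q(\beta)$. Setting $\mathbf y^{(i)}:=\beta^{(i)}/m_i\in\Delta_{n_i}$ and dividing by $\prod_i m_i^{d_i}$, I would establish that $Q(\beta)\big/\prod_i m_i^{d_i}=f(\mathbf y^{(1)},\dots,\mathbf y^{(p)})+E(\beta)$, where $E(\beta)\to0$ uniformly over all admissible $\beta$ as $\min_i m_i\to\infty$. This rests on the elementary per-coordinate bound $0\le(\beta^{(i)}_j)^{\alpha^{(i)}_j}-\prod_{k=0}^{\alpha^{(i)}_j-1}(\beta^{(i)}_j-k)\le d_i^2\,(\beta^{(i)}_j)^{\alpha^{(i)}_j-1}$, valid for every coordinate (trivially when $\beta^{(i)}_j=0$). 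Taking the product over all $(i,j)$ and subtracting the leading term $\prod_{i,j}(\beta^{(i)}_j)^{\alpha^{(i)}_j}=\prod_i m_i^{d_i}\prod_i(\mathbf y^{(i)})^{\alpha^{(i)}}$ leaves a remainder of order $\prod_i m_i^{d_i}\big/\min_i m_i$. Summing against the finitely many coefficients $c_\alpha$ then yields the claimed uniform error estimate, with the leading term reproducing $f(\mathbf y^{(1)},\dots,\mathbf y^{(p)})=\sum_\alpha c_\alpha\prod_i(\mathbf y^{(i)})^{\alpha^{(i)}}$.

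To conclude, I would invoke compactness: since $f$ is continuous and strictly positive on the compact set $\Delta_{n_1}\times\dots\times\Delta_{n_p}$, it attains a minimum $\delta>0$ there. Choosing $R$ so that $|E(\beta)|<\delta$ whenever all $m_i\ge R$, every multi-monomial coefficient satisfies $Q(\beta)\big/\prod_i m_i^{d_i}\ge f(\mathbf y^{(1)},\dots,\mathbf y^{(p)})-|E(\beta)|\ge\delta-|E(\beta)|>0$. Hence for all $r_i$ with $r_i+d_i\ge R$, that is, for all sufficiently large $r_i$, the polynomial $\bigl[\prod_i(\mathbf e^\mathsf{T}\mathbf x^{(i)})^{r_i}\bigr]f$ has strictly positive coefficients, as desired.

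I anticipate the main obstacle to be the \emph{uniformity} of the error estimate $E(\beta)$ across the whole product of simplices, in particular near its boundary where some coordinates $\beta^{(i)}_j$ are of order $1$ rather than of order $m_i$; there the individual falling-factorial approximation $\beta!/(\beta-\alpha)!\approx\beta^\alpha$ is poor. The per-coordinate bound above is chosen precisely to control this: it degenerates gracefully (both sides vanishing) when a coordinate is zero, so that the total relative error stays $O(1/\min_i m_i)$ uniformly over the closed product of simplices rather than blowing up on its boundary faces. The remaining bookkeeping — the multinomial factorization and the telescoping of the product of differences — is routine.
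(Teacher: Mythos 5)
Your proof is correct and follows essentially the same P\'olya-style route as the paper's: both identify each coefficient of $\bigl[\prod_{i=1}^p(\mathbf e^\mathsf{T}\mathbf x^{(i)})^{r_i}\bigr]f$ as a positive multiple of a discretized version of $f$ evaluated at the rational simplex point with coordinates $\beta^{(i)}/m_i$ (your $Q(\beta)/\prod_i m_i^{d_i}$ is, up to positive constants, exactly the paper's $\phi(\kappa/\mathbf s,1/\mathbf s)$, your falling factorials playing the role of its binomial coefficients ${\kappa^{(i)}\choose\alpha^{(i)}}$), and both conclude via positivity of $f$ on the compact joint simplex. If anything, your explicit $O(1/\min_i m_i)$ error estimate makes the uniformity of the convergence explicit, a point the paper's proof treats only implicitly when passing from its pointwise limit to the bound $\phi(\kappa/\mathbf s,1/\mathbf s)\geq\mu/2$.
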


\begin{proof}
For any $\gamma=(\gamma^{(1)},\dots,\gamma^{(p)})\in\mathbb N^{n_1}\times\dots\times\mathbb N^{n_p}$, let
\[
\gamma^{(i)}!:=\frac{|\gamma^{(i)}|!}{\prod_{j=1}^{n_i}\gamma^{(i)}_j!}\ \text{for all }i\in\{1,\dots,p\}
\]
and
\[
\gamma!:=\gamma^{(1)}!\dots\gamma^{(p)}!.
\]
If $\gamma\leq\alpha$ (i.e., $\gamma^{(i)}_j\leq\alpha^{(i)}_j$ for all $j\in\{1,\dots,n_i\}$ and $i\in\{1,\dots,p\}$), then
\[
{\alpha^{(i)}\choose\gamma^{(i)}}:=\prod_{j=1}^{n_i}{\alpha^{(i)}_j\choose\gamma^{(i)}_j}\ \text{for all }j\in\{1,\dots,p\}.
\]

Suppose that the polynomial $f$ has the expansion
\[
f(\mathbf x)=\sum_{\alpha\in \Lambda(d_1,\dots,d_p)}\alpha!a_{\alpha}\prod_{i=1}^p{\big(\mathbf x^{(i)}\big)}^{\alpha^{(i)}},
\]
where
\[
\Lambda(d_1,\dots,d_p):=\{\alpha\in\mathbb N^{n_1}\times\dots\times\mathbb N^{n_p}\colon  |\alpha^{(i)}|=d_i\ \text{for all }i=1,\dots,p\}.
\]

Let
\[
\phi(\mathbf x,\mathbf t):=\prod_{i=1}^pt_i^{d_i}\sum_{\alpha\in \Lambda(d_1,\dots,d_p)} a_{\alpha}\prod_{i=1}^p{\mathbf x^{(i)}t_i^{-1}\choose\alpha^{(i)}},
\]
where $\mathbf x\in\mathbb N^{n_1}\times\dots\times\mathbb N^{n_p}$ and $\mathbf t\in\mathbb N_{++}^p$.
Note that for all $i\in\{1,\dots,p\}$, we have
\begin{align*}
t_i^{d_i}{\mathbf x^{(i)}t_i^{-1}\choose\alpha^{(i)}}&=t_i^{d_i}{x_1^{(i)}t_i^{-1}\choose\alpha_1^{(i)}}\dots{x_{n_i}^{(i)}t_i^{-1}\choose\alpha_{n_i}^{(i)}}\\
&=\frac{x_1^{(i)}(x_1^{(i)}-t_i)(x_1^{(i)}-2t_i)\dots (x_1^{(i)}-(\alpha_1^{(i)}-1)t_i)}{\alpha_1^{(i)}!}\\
&\ \ \dots\frac{x_{n_i}^{(i)}(x_{n_i}^{(i)}-t_i)(x_{n_i}^{(i)}-2t_i) \dots (x_{n_i}^{(i)}-(\alpha_{n_i}^{(i)}-1)t_i)}{\alpha_{n_i}^{(i)}!}.
\end{align*}
Therefore, we have that
\begin{equation}\label{eq:limit}
\phi(\mathbf x,\mathbf t)\rightarrow\prod_{i=1}^p\frac{1}{d_i!} f(\mathbf x)\ \text{as }\mathbf t\rightarrow \mathbf 0.
\end{equation}

By the multinomial expansion, we have
\begin{equation}\label{eq:multinomial}
\prod_{i=1}^p(\mathbf e^\mathsf{T}\mathbf x^{(i)})^{r_i}=\prod_{i=1}^p\Big(\sum_{|\gamma^{(i)}|=r_i}\gamma^{(i)}!(\mathbf x^{(i)})^{\gamma^{(i)}}\Big).
\end{equation}
Therefore, multiplying \eqref{eq:multinomial} to the expansion of $f$, we have
\begin{align*}
\bigg[\prod_{i=1}^p(\mathbf e^\mathsf{T}\mathbf x^{(i)})^{r_i}\bigg]f(\mathbf x)
&=\prod_{i=1}^p\Big(\sum_{|\gamma^{(i)}|=r_i}\gamma^{(i)}!(\mathbf x^{(i)})^{\gamma^{(i)}}\Big)\Big(\sum_{\alpha\in \Lambda(d_1,\dots,d_p)}\alpha!a_{\alpha}\prod_{i=1}^p\big(\mathbf x^{(i)}\big)^{\alpha^{(i)}}\Big)
\\
&=\sum_{\alpha\in \Lambda(d_1,\dots,d_p)}\sum_{|\gamma^{(1)}|=r_1}\dots\sum_{|\gamma^{(p)}|=r_p}\alpha!a_{\alpha}
\Big(\prod_{i=1}^p\gamma^{(i)}!\Big)\prod_{i=1}^p\big(\mathbf x^{(i)}\big)^{\alpha^{(i)}+\gamma^{(i)}}\\
&=\frac{\prod_{i=1}^pd_i!\prod_{i=1}^pr_i!}{\prod_{i=1}^ps_i!}\sum_{\kappa\in\Lambda(s_1,\dots,s_p)}\prod_{i=1}^p\big(\mathbf x^{(i)}\big)^{\kappa^{(i)}}
\left[\sum_{\alpha\in \Lambda(d_1,\dots,d_p)} \Big(\prod_{i=1}^p\kappa^{(i)}!\Big)a_{\alpha}\prod_{i=1}^p{\kappa^{(i)}\choose\alpha^{(i)}} \right]
\\
&=\frac{\prod_{i=1}^pd_i!\prod_{i=1}^pr_i!}{\prod_{i=1}^ps_i!}\sum_{\kappa\in\Lambda(s_1,\dots,s_p)}\Big(\prod_{i=1}^p\kappa^{(i)}!\Big)
\big(\mathbf x^{(i)}\big)^{\kappa^{(i)}}
\left[ \sum_{\alpha\in \Lambda(d_1,\dots,d_p)}a_{\alpha}\prod_{i=1}^p{\kappa^{(i)}\choose\alpha^{(i)}} \right]
\\
&=\frac{\prod_{i=1}^pd_i!\prod_{i=1}^pr_i!}{\prod_{i=1}^ps_i!}\prod_{i=1}^ps_i^{d_i}\sum_{\kappa\in\Lambda(s_1,\dots,s_p)}\Big(\prod_{i=1}^p\kappa^{(i)}!\Big)\big(\mathbf x^{(i)}\big)^{\kappa^{(i)}}\phi(\kappa/\mathbf s,1/\mathbf s),
\end{align*}
where the third equality follows from the fact that for all $i\in\{1,\dots,p\}$
\begin{align*}
& \frac{|\alpha{(i)}|!}{\alpha^{(i)}_1!\dots\alpha^{(i)}_{n_i}!}\frac{|\gamma{(i)}|!}{\gamma^{(i)}_1!\dots\gamma^{(i)}_{n_i}!} \;=\; \frac{d_i!r_i!}{(\alpha^{(i)}_1+\gamma^{(i)}_1)!\dots(\alpha^{(i)}_{n_i}+\gamma^{(i)}_{n_i})!}\prod_{j=1}^{n_i}{\alpha^{(i)}_j+\gamma^{(i)}_j\choose\alpha^{(i)}_j}
\\[5pt]
&=\; \frac{d_i!r_i!}{\kappa^{(i)}_1!\dots \kappa^{(i)}_{n_i}!}\prod_{j=1}^{n_i}{\kappa^{(i)}_j\choose\alpha^{(i)}_j}
\;\; =\;\;
\frac{d_i!r_i!}{|\kappa^{(i)}|!}\frac{|\kappa^{(i)}|!}{\kappa^{(i)}_1!\dots \kappa^{(i)}_{n_i}!}\prod_{j=1}^{n_i}{\kappa^{(i)}_j\choose\alpha^{(i)}_j}
\\[5pt]
&=\;  \frac{d_i!r_i!}{s_i!}\kappa^{(i)}!{\kappa^{(i)}\choose \alpha^{(i)}}.
\end{align*}
In the above,
\[
\kappa/\mathbf s:=(\kappa^{(1)}/s_1,\dots,\kappa^{(p)}/s_p)
\]
and
\[
s_i:=d_i+r_i\ \text{for all }i\in\{1,\dots,p\}.
\]
Since $f$ is positive over the joint simplex $\Delta_{n_1}\times\dots\times\Delta_{n_p}$ which is compact, we have that there exists $\mu>0$ such that
\[
f(\mathbf x)\geq\mu>0\ \text{for all }\mathbf x\in \Delta_{n_1}\times\dots\times\Delta_{n_p}.
\]
Obviously, $\kappa/\mathbf s\in \Delta_{n_1}\times\dots\times\Delta_{n_p}$. Thus for sufficiently large $\mathbf s$ (of course component-wisely),
by using \eqref{eq:limit},
we have
\[
\phi(\kappa/\mathbf s,1/\mathbf s)\geq\frac{\mu}{2}>0.
\]
Consequently, the result follows.
\end{proof}

The complexity of this Positivstellensatz can be investigated, as in\cite{NS-07,PR-01}.
But we will leave it for the future research since this article is focused on the zero-th order relaxation.

\subsection{DNN relaxation}\label{sec:sos}
In this section, we will introduce a doubly nonngeative (DNN) relaxation method for solving problem~\eqref{nonnegative-problem-homo}.

Let $\mathbf z = (z_1,\dots,z_n)^\mathsf{T}$, and
\[
\mathbf z^{[s]}:=\big(z_1^s,z_1^{s-1}z_2,z_1^{s-1}z_3,\dots,z_1^{s-2}z_2^2,z_1^{s-2}z_2z_3,\dots,z_2^s,\dots,z_n^s\big)^\mathsf{T}
\]
be the monomial basis of degree $s$ in $n$ variables. The order is the lexicographic order and $z_1\succ z_2\succ \dots\succ z_n$.
Note that the length of $\mathbf z^{[s]}$ is
\[
\nu(s,n):={n+s-1\choose s}.
\]

Let $\tau=(\tau_1,\dots,\tau_p)\in\mathbb Z^p_+$, $\mathbf x\in\mathbb R^{n_1}\times\dots\times\mathbb R^{n_p}$, and
\[
\mathbf x^{[\tau]}:=\big(\mathbf x^{(1)}\big)^{[\tau_1]}\otimes\dots\otimes \big(\mathbf x^{(p)}\big)^{[\tau_p]}.
\]
The monomials are ordered in the lexicographic order with $\mathbf x^{(1)}\succ \dots\succ \mathbf x^{(p)}$ for the groups of variables.
Let
\[
\nu(\tau,n_1,\dots,n_p):=\prod_{j=1}^p\nu(\tau_j,n_j),
\]
and $A_\alpha\in\mathbb R^{\nu(\tau,n_1,\dots,n_p)\times\nu(\tau,n_1,\dots,n_p)}$ be the coefficient matrix of $\mathbf x^{[\tau]}\big(\mathbf x^{[\tau]}\big)^\mathsf{T}$ in the standard basis $\mathbf x^{[2\tau]}$, i.e.,
\begin{equation}\label{coefficient-moment}
\mathbf x^{[\tau]}\big(\mathbf x^{[\tau]}\big)^\mathsf{T}=\sum_{\alpha\in \mathbb N^{n_1}_{2\tau_1}\times\dots\times\mathbb N^{n_p}_{2\tau_p}}A_\alpha \mathbf x^\alpha,
\end{equation}
where $\mathbb N^n_m:=\{\gamma\in\mathbb N^n : \gamma_1+\dots+\gamma_n=m\}$.

Before stating the DNN relaxation problem, we first give a simple observation on the nonnegativity of moment sequences.
\begin{proposition}[Nonnegativity Equivalence]\label{prop:monomial}
Let all notation be as above. Then, the coefficient matrices in the set $\{A_{\alpha}\}$ are nonnegative and orthogonal to each other, and thus
\begin{equation}\label{eq:moment}
\mathbf y\in\mathbb R^{\nu(\mathbf d,n_1,\dots,n_p)}_+ \ \text{if and only if }M(\mathbf y):=\sum_{\alpha\in \mathbb N^{n_1}_{d_1}\times\dots\times\mathbb N^{n_p}_{d_p}}A_\alpha y_\alpha \geq 0.
\end{equation}
\end{proposition}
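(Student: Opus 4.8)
The plan is to exploit the fact that every entry of the rank-one matrix $\mathbf x^{[\tau]}\big(\mathbf x^{[\tau]}\big)^\mathsf{T}$ is a \emph{single} monomial. First I would note that its $(k,l)$ entry is the product of the $k$-th and $l$-th entries of $\mathbf x^{[\tau]}$; since $\mathbf x^{[\tau]}=\big(\mathbf x^{(1)}\big)^{[\tau_1]}\otimes\dots\otimes\big(\mathbf x^{(p)}\big)^{[\tau_p]}$, this product is a monomial $\mathbf x^\gamma$ with $|\gamma^{(i)}|=2\tau_i=d_i$ for every $i$. Reading off \eqref{coefficient-moment}, the matrix $A_\alpha$ is therefore exactly the $0$--$1$ indicator matrix of the set of positions $(k,l)$ at which that product equals $\mathbf x^\alpha$. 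In particular every $A_\alpha$ is entrywise nonnegative, which settles the first assertion.

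For orthogonality the key point is that a fixed position $(k,l)$ carries one and only one monomial, so it can lie in the support of at most one $A_\alpha$. Hence the supports $\{(k,l):(A_\alpha)_{kl}=1\}$ are pairwise disjoint as $\alpha$ varies, and therefore $\langle A_\alpha,A_\beta\rangle=\sum_{k,l}(A_\alpha)_{kl}(A_\beta)_{kl}=0$ whenever $\alpha\neq\beta$.

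To prove the equivalence \eqref{eq:moment}, the forward direction is immediate: if $\mathbf y\geq\mathbf 0$ then $M(\mathbf y)=\sum_\alpha A_\alpha y_\alpha$ is a nonnegative combination of nonnegative matrices and is thus entrywise nonnegative. For the converse I would argue positionwise: by the disjoint-support structure above, the $(k,l)$ entry of $M(\mathbf y)$ equals exactly $y_{\gamma(k,l)}$, where $\gamma(k,l)$ is the unique multi-index attached to that position. Thus $M(\mathbf y)\geq 0$ forces $y_\alpha\geq 0$ for every $\alpha$ that actually occurs as some $\gamma(k,l)$; it remains only to verify that \emph{every} admissible $\alpha$ occurs, i.e. that each $A_\alpha$ is nonzero.

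This last point is the only place requiring a genuine (though elementary) combinatorial argument, and I expect it to be the main obstacle. Because of the tensor-product structure the question decouples over the $p$ groups, so it reduces to showing that any exponent vector $\alpha^{(i)}$ with $|\alpha^{(i)}|=2\tau_i$ can be written as $\beta^{(i)}+\delta^{(i)}$ with $|\beta^{(i)}|=|\delta^{(i)}|=\tau_i$ --- that is, every degree-$2\tau_i$ monomial factors as a product of two degree-$\tau_i$ basis monomials. This follows by a straightforward greedy splitting of $\alpha^{(i)}$ into two halves of equal total degree, and it guarantees that $\mathbf x^\alpha$ appears at the corresponding position $(k,l)$. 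Combining this surjectivity with the positionwise identity yields $y_\alpha\geq 0$ for all $\alpha$, i.e. $\mathbf y\geq\mathbf 0$, completing the equivalence.
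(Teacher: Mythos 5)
Your proof is correct and follows essentially the same route as the paper's: the $A_\alpha$ are $0$--$1$ indicator matrices with pairwise disjoint supports, and the equivalence \eqref{eq:moment} is read off entrywise. The only difference is that you explicitly verify that every $A_\alpha$ is nonzero (via the splitting of any $\alpha^{(i)}$ with $|\alpha^{(i)}|=2\tau_i$ into two exponent vectors of degree $\tau_i$), a step the paper leaves implicit behind the identity $\sum_\alpha A_\alpha = E$; making it explicit is a genuine (if small) improvement, since sufficiency would fail for any $\alpha$ whose coefficient matrix vanished.
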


\begin{proof}
According to the definition, each $A_\alpha$ is a nonnegative matrix. Therefore, the necessity is obvious.
The sufficiency follows from the fact that
\[
\langle A_\alpha, A_\gamma\rangle = 0
\]
for all $\alpha\neq \gamma$, and
\[
\sum_{\alpha\in \mathbb N^{n_1}_{d_1}\times\dots\times\mathbb N^{n_p}_{d_p}}A_\alpha = E,
\]
where $E$ is
the matrix of all ones.
\end{proof}

Denote $\mathbf d:=2\tau=(2\tau_1,\dots,2\tau_p)$.
Let
$\mathbf f\in\mathbb R^{\nu(\mathbf d,n_1,\dots,n_p)}$ be the coefficient vector of the polynomial $f(\mathbf x^{(1)},\dots,\mathbf x^{(p)})$ in the standard basis $\mathbf x^{[\mathbf d]}$, and let $\mathbf g\in\mathbb R^{\nu(\mathbf d,n_1,\dots,n_p)}$ be that for the polynomial $g(\mathbf x):=\prod_{j=1}^p\big[\big(\mathbf x^{(j)}\big)^\mathsf{T}\mathbf x^{(j)}\big]^{\tau_j}$.

The basic idea of the SOS relaxation in \cite{L-01} is by relaxing the rank characterization of a moment vector $\mathbf y\in\mathbb R^{\nu(\mathbf d,n_1,\dots,n_p)}$. Without the nonnegativity constraint, it is classically relaxed as $M(\mathbf y)\succeq 0$, i.e., the positive semidefiniteness of the moment matrix, see \cite{L-01,N-12,NZ-16}. It can be shown that the dual problem under this method is an SDP problem
obtained by representing a polynomial as a sum of squares (SOS).
Therefore, this relaxation method is usually referred to as the \textit{SOS relaxation}.
With Proposition~\ref{prop:monomial}, a moment vector generated by a nonnegative vector is then naturally relaxed as $M(\mathbf y)\succeq 0$ and $M(\mathbf y)\geq 0$, i.e., the moment matrix is both positive semidefinite and component-wisely nonnegative. A matrix
that is both positive semidefinite and component-wisely nonnegative is
said to be \textit{doubly nonnegative}.

Naturally, a standard \textit{doubly nonnegative (DNN) relaxation} of problem \eqref{nonnegative-problem-homo} is
\begin{equation}\label{relaxation-primal}
\begin{array}{rrl}f_{\text{dnn}}:=& \min&\langle \mathbf f,\mathbf y\rangle
\\[3pt]
&\text{s.t.}& M(\mathbf y)\succeq \mathbf 0,\\[3pt]
&& M(\mathbf y)\geq \mathbf 0,\\[3pt]
&&\langle \mathbf g,\mathbf y\rangle =1, \ \mathbf y\in\mathbb R^{\nu(\mathbf d,n_1,\dots,n_p)}, \end{array}
\end{equation}
and the dual of which is
\begin{equation}\label{relaxation-dual}
\begin{array}{rl} \max&\gamma
\\[3pt]
\text{s.t.}&\mathbf f-\gamma \mathbf g\in\Sigma^+_{\mathbf d,n_1,\dots,n_p}, \end{array}
\end{equation}
where
\[
\Sigma^+_{\mathbf d,n_1,\dots,n_p}:=\big\{\mathbf h\colon h(\mathbf x)\in\mathbb R[\mathbf x]_{\mathbf d},\  h(\mathbf x)=\big(\mathbf x^{[\tau]}\big)^\mathsf{T}(S+T)(\mathbf x^{[\tau]})\ \text{for some }S\succeq \mathbf 0\ \text{and }T\geq \mathbf 0\big\}.
\]
Here $\mathbb R[\mathbf x]_{\mathbf d}\subset \mathbb R[\mathbf x]$ is the set of multi-forms being homogeneous of degree $d_i$ with respect to $\mathbf x^{(i)}$ for all $i\in\{1,\dots,p\}$.
Note that the cone of sums of squares
\[
\Sigma_{\mathbf d,n_1,\dots,n_p}=\big\{\mathbf h\colon h(\mathbf x)\in\mathbb R[\mathbf x]_{\mathbf d},\  h(\mathbf x)=\big(\mathbf x^{[\tau]}\big)^\mathsf{T}S(\mathbf x^{[\tau]})\ \text{for some }S\succeq \mathbf 0\big\}
\]
is strictly contained in $\Sigma^+_{\mathbf d,n_1,\dots,n_p}$. If there is no confusion, we sometimes will write $h(\mathbf x)\in \Sigma^+_{\mathbf d,n_1,\dots,n_p}$ for a multi-form $h(\mathbf x)$, meaning its coefficient vector $\mathbf h\in \Sigma^+_{\mathbf d,n_1,\dots,n_p}$.

The above DNN relaxation, together with Proposition~\ref{prop:positive}, motivates a hierarchy of DNN relaxations for the
optimization problem~\eqref{nonnegative-problem-homo}.
\begin{proposition}\label{prop:convergence-dnn}
Let $\eta\in\mathbb N^p$ and $\gamma_{\eta}$ be the optimal value of the following problem
\begin{equation}\label{eq:hierarchy-dnn}
\gamma_{\eta}:=\max\bigg\{\gamma\colon \prod_{i=1}^p(\mathbf e^\mathsf{T}\mathbf x^{(i)})^{2\eta_i}(f(\mathbf x)-\gamma g(\mathbf x))\in\Sigma^+_{\mathbf d+2\eta,n_1,\dots,n_p}\bigg\}.
\end{equation}
Then
\begin{equation}\label{eq:opt-dnn-relax}
f_{\text{{\rm dnn}}}\leq \gamma_{\eta}\leq f_{\min},\ \text{and }\gamma_{\eta}\leq\gamma_{\overline{\eta}}\ \text{whenever }\eta\leq\overline{\eta},
\end{equation}
and
\[
\gamma_{\eta}\rightarrow f_{\min}\ \text{as }\min\{\eta_i\colon i=1,\dots,p\}\rightarrow\infty.
\]
\end{proposition}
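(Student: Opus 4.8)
The plan is to prove the three assertions separately: the two-sided bound $f_{\text{dnn}}\le\gamma_\eta\le f_{\min}$ and the monotonicity $\gamma_\eta\le\gamma_{\overline\eta}$ recorded in \eqref{eq:opt-dnn-relax}, and then the limit $\gamma_\eta\to f_{\min}$. The bounds and the monotonicity are comparatively soft and follow from the defining membership in $\Sigma^+_{\mathbf d+2\eta,n_1,\dots,n_p}$ together with duality; the real content is the convergence, which I would route through the generalized P\'olya theorem (Proposition~\ref{prop:positive}).

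For the upper bound I would first note that every $\mathbf h\in\Sigma^+_{\mathbf d+2\eta,n_1,\dots,n_p}$ is nonnegative on the nonnegative orthant: writing $h=(\mathbf x^{[\tau+\eta]})^\mathsf{T}(S+T)(\mathbf x^{[\tau+\eta]})$, the $S\succeq\mathbf 0$ term is nonnegative everywhere, while the $T\geq\mathbf 0$ term is nonnegative whenever $\mathbf x\geq\mathbf 0$ since then $\mathbf x^{[\tau+\eta]}\geq\mathbf 0$. Hence if $\gamma$ is feasible for \eqref{eq:hierarchy-dnn}, then $\prod_{i}(\mathbf e^\mathsf{T}\mathbf x^{(i)})^{2\eta_i}(f(\mathbf x)-\gamma g(\mathbf x))\geq 0$ on the orthant; restricting to the nonnegative multi-sphere, where $g\equiv 1$ and each $\mathbf e^\mathsf{T}\mathbf x^{(i)}>0$, the positive prefactor cancels and gives $f(\mathbf x)\geq\gamma$ on the whole feasible set, so $f_{\min}\geq\gamma_\eta$. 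For the lower bound I would observe that at $\eta=\mathbf 0$ the constraint in \eqref{eq:hierarchy-dnn} is exactly $\mathbf f-\gamma\mathbf g\in\Sigma^+_{\mathbf d,n_1,\dots,n_p}$, so $\gamma_{\mathbf 0}$ is the optimal value of \eqref{relaxation-dual}; since Slater's condition holds for the primal \eqref{relaxation-primal} (a smooth positive density on the positive-orthant sphere yields a $\mathbf y$ with $M(\mathbf y)$ simultaneously positive definite and entrywise positive), there is no duality gap and $\gamma_{\mathbf 0}=f_{\text{dnn}}$. Combined with monotonicity this yields $f_{\text{dnn}}=\gamma_{\mathbf 0}\leq\gamma_\eta$.

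Monotonicity I would prove by a multiplier argument. Given $\eta\leq\overline\eta$ and a $\gamma$ feasible at level $\eta$, multiply the witnessing element of $\Sigma^+_{\mathbf d+2\eta,n_1,\dots,n_p}$ by $q:=\prod_i(\mathbf e^\mathsf{T}\mathbf x^{(i)})^{2(\overline\eta_i-\eta_i)}$, which is simultaneously a perfect square and a polynomial with nonnegative coefficients. Splitting the witness as $\sigma+\rho$ with $\sigma$ in the SOS part and $\rho$ in the nonnegative-coefficient part, the product $q\sigma$ is a product of two sums of squares, hence again a sum of squares, re-expressed in the basis $\mathbf x^{[\tau+\overline\eta]}$ via the identity $\big(\prod_i(\mathbf e^\mathsf{T}\mathbf x^{(i)})^{\overline\eta_i-\eta_i}\big)\mathbf x^{[\tau+\eta]}=L\,\mathbf x^{[\tau+\overline\eta]}$ for a nonnegative matrix $L$ (so $q\sigma=(\mathbf x^{[\tau+\overline\eta]})^\mathsf{T}(L^\mathsf{T}SL)\mathbf x^{[\tau+\overline\eta]}$ with $L^\mathsf{T}SL\succeq\mathbf 0$), while $q\rho$ is a product of two polynomials with nonnegative coefficients and so, by the correspondence in Proposition~\ref{prop:monomial} between nonnegative coefficient vectors and the $T\geq\mathbf 0$ summand, lies in the nonnegative-coefficient part at degree $\mathbf d+2\overline\eta$. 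Thus the product lies in $\Sigma^+_{\mathbf d+2\overline\eta,n_1,\dots,n_p}$, so $\gamma$ is feasible at level $\overline\eta$ and $\gamma_{\overline\eta}\geq\gamma_\eta$.

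The convergence is where the work lies. Fix $\epsilon>0$ and set $\gamma=f_{\min}-\epsilon$. Using multi-homogeneity, for any $\mathbf x$ in the joint simplex $\Delta_{n_1}\times\dots\times\Delta_{n_p}$ I would write $f(\mathbf x)-\gamma g(\mathbf x)=g(\mathbf x)\big[f(\hat{\mathbf x})-(f_{\min}-\epsilon)\big]$, where $\hat{\mathbf x}$ is the block-wise normalization onto the nonnegative multi-sphere; since $f(\hat{\mathbf x})\geq f_{\min}$ and $g(\mathbf x)>0$ on the simplex, the multi-form $f-\gamma g$ is strictly positive there. Proposition~\ref{prop:positive} then supplies a threshold $N$ such that $\big[\prod_i(\mathbf e^\mathsf{T}\mathbf x^{(i)})^{r_i}\big](f-\gamma g)$ has positive coefficients whenever all $r_i\geq N$; choosing the $r_i=2\eta_i$ even (permissible, as ``sufficiently large'' includes even integers) produces a polynomial with nonnegative coefficients of degree $\mathbf d+2\eta$, which lies in $\Sigma^+_{\mathbf d+2\eta,n_1,\dots,n_p}$ by Proposition~\ref{prop:monomial}. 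Hence $\gamma=f_{\min}-\epsilon$ is feasible for \eqref{eq:hierarchy-dnn} once $\min_i\eta_i\geq N/2$, giving $\gamma_\eta\geq f_{\min}-\epsilon$; combined with $\gamma_\eta\leq f_{\min}$ and monotonicity (which makes the bound persist for all larger $\eta$, and $\min_i\eta_i\to\infty$ forces $\eta$ to dominate any fixed threshold coordinatewise), letting $\epsilon\downarrow 0$ gives $\gamma_\eta\to f_{\min}$. \textbf{The main obstacle} I anticipate is this last step: converting optimality of $f_{\min}$ into \emph{strict} positivity of $f-(f_{\min}-\epsilon)g$ on the simplex via the homogeneity reduction, and then matching Proposition~\ref{prop:positive} to the even-degree hierarchy by forcing the P\'olya multiplier to carry \emph{even} exponents and recognizing its positive-coefficient output as an element of $\Sigma^+$ at exactly degree $\mathbf d+2\eta$; the quantitative ``$\min_i\eta_i\to\infty$'' conclusion then follows only after folding the $\epsilon$-dependent threshold into the monotonicity.
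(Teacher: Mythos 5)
Your proposal is correct and takes essentially the same route as the paper: the convergence argument is identical (strict positivity of $f-(f_{\min}-\epsilon)g$ on the joint simplex via multi-homogeneity, then Proposition~\ref{prop:positive} to place the multiplied form in $\Sigma^+_{\mathbf d+2\eta,n_1,\dots,n_p}$ for all coordinatewise large $\eta$, then the sandwich $f_{\min}-\epsilon\leq\gamma_\eta\leq f_{\min}$). The parts of \eqref{eq:opt-dnn-relax} that the paper dismisses as following ``directly'' are exactly what you fleshed out -- orthant nonnegativity of elements of $\Sigma^+$, the multiplier/congruence argument for monotonicity, and the identification $\gamma_{\mathbf 0}=f_{\text{dnn}}$ via strict feasibility (Lemma~\ref{lem:strict}) and the absence of a duality gap (Proposition~\ref{prop:solve}) -- and these details are all correct.
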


\begin{proof}
The relations in \eqref{eq:opt-dnn-relax} follows directly from the fact that each $\mathbf e^\mathsf{T}\mathbf x^{(i)}$ is a polynomial with positive coefficients.

Given an arbitrary $\epsilon>0$, we know that the multi-form $f(\mathbf x)-(f_{\min}-\epsilon) g(\mathbf x)$ is positive on the nonnegative multi-sphere. Since $f(\mathbf x)-(f_{\min}-\epsilon) g(\mathbf x)$ is a multi-form, it is still positive on the joint simplex. Thus, it follows from Proposition~\ref{prop:positive} that there are positive integers $r_i$'s such that
\[
\prod_{i=1}^p(\mathbf e^\mathsf{T}\mathbf x^{(i)})^{2\eta_i}(f(\mathbf x)-(f_{\min}-\epsilon) g(\mathbf x))\in \Sigma^+_{\mathbf d+2\eta,n_1,\dots,n_p}
\]
for all $\eta\geq \mathbf r$. Therefore, for all $\eta\geq\mathbf r$,
\[
f_{\min}-\epsilon\leq \gamma_{\eta}\leq f_{\min}.
\]
The conclusion thus follows.
\end{proof}

Proposition~\ref{prop:convergence-dnn} gives the global convergence of the hierarchy of DNN relaxations (cf.\ \eqref{eq:hierarchy-dnn}) for
the problem~\eqref{nonnegative-problem-homo}, parallel to that of SOS relaxations (cf.\ \cite[Theorem~3.4]{L-01}).
Problem~\eqref{relaxation-dual} is the \textit{zero-th order} DNN relaxation, i.e., $\eta=\mathbf 0$ in \eqref{eq:hierarchy-dnn}.

In the following, some
properties on the two matrix optimization problems \eqref{relaxation-primal} and \eqref{relaxation-dual} will be investigated.

\begin{lemma}\label{lem:strict}
There exists a $\mathbf y\in\mathbb R^{\nu(\mathbf d,n_1,\dots,n_p)}$ such that $M(\mathbf y)\succ\mathbf 0$ and $M(\mathbf y)>\mathbf 0$, i.e., the linear conic problem \eqref{relaxation-primal} is strictly feasible.
\end{lemma}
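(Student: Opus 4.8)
The plan is to produce a genuine (pseudo-)moment vector arising from integrating monomials against a measure concentrated on the strictly positive orthant; the integral representation of $M(\mathbf y)$ will then make all three requirements almost automatic. Concretely, I would fix any nonempty bounded open set $U\subset\mathbb R^{n_1}_{++}\times\dots\times\mathbb R^{n_p}_{++}$ (for instance a small open box of strictly positive vectors) and let $\mu$ be Lebesgue measure restricted to $U$. Setting
\[
y_\alpha:=\int_U \mathbf x^\alpha\,d\mu(\mathbf x)\qquad\text{for every }\alpha\in\mathbb N^{n_1}_{d_1}\times\dots\times\mathbb N^{n_p}_{d_p},
\]
the defining identity \eqref{coefficient-moment} together with linearity of the integral gives
\[
M(\mathbf y)=\sum_\alpha A_\alpha y_\alpha=\int_U \mathbf x^{[\tau]}\big(\mathbf x^{[\tau]}\big)^\mathsf{T}\,d\mu(\mathbf x),
\]
which exhibits $M(\mathbf y)$ as an integral of rank-one positive semidefinite matrices and hence $M(\mathbf y)\succeq\mathbf 0$.

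I would then verify the two strict inequalities. For the entrywise positivity, the $(i,j)$ entry of $M(\mathbf y)$ equals $\int_U (\mathbf x^{[\tau]})_i(\mathbf x^{[\tau]})_j\,d\mu$; since each coordinate of $\mathbf x^{[\tau]}$ is a monomial, the integrand is a monomial that is strictly positive throughout $U$, so the integral is strictly positive and $M(\mathbf y)>\mathbf 0$. For positive definiteness, suppose $\mathbf v^\mathsf{T} M(\mathbf y)\mathbf v=\int_U\big(\mathbf v^\mathsf{T}\mathbf x^{[\tau]}\big)^2\,d\mu=0$ for some $\mathbf v$. The integrand is nonnegative and continuous, so $\mathbf v^\mathsf{T}\mathbf x^{[\tau]}$ vanishes identically on the open set $U$; a polynomial vanishing on a nonempty open subset of $\mathbb R^{n_1}\times\dots\times\mathbb R^{n_p}$ is the zero polynomial, and the monomial entries of $\mathbf x^{[\tau]}$ are linearly independent, forcing $\mathbf v=\mathbf 0$. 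Hence $M(\mathbf y)\succ\mathbf 0$.

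It remains to enforce the normalization $\langle\mathbf g,\mathbf y\rangle=1$. Because $\mathbf g$ is the coefficient vector of $g(\mathbf x)=\prod_{j=1}^p\big[(\mathbf x^{(j)})^\mathsf{T}\mathbf x^{(j)}\big]^{\tau_j}$, we have $\langle\mathbf g,\mathbf y\rangle=\int_U g(\mathbf x)\,d\mu$, and $g$ is strictly positive on $U$, so $\langle\mathbf g,\mathbf y\rangle>0$. Replacing $\mathbf y$ by $\mathbf y/\langle\mathbf g,\mathbf y\rangle$ scales $M(\mathbf y)$ by a positive constant, preserving both $M(\mathbf y)\succ\mathbf 0$ and $M(\mathbf y)>\mathbf 0$ while making $\langle\mathbf g,\mathbf y\rangle=1$. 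This yields a strictly feasible point of \eqref{relaxation-primal}. The only step requiring a genuine argument is the positive definiteness, whose crux is the linear independence of the monomial entries of $\mathbf x^{[\tau]}$ and the vanishing of a nonzero polynomial on an open set; the remaining points follow directly from the integral representation, so I anticipate no real difficulty.
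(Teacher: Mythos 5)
Your proof is correct, and while it follows the same overall strategy as the paper --- construct $\mathbf y$ as a truncated moment vector of a measure supported in the nonnegative orthant and read the three required properties off the integral representation of $M(\mathbf y)$ --- the choice of measure is genuinely different and changes where the work lies. The paper takes $\mu$ to be the uniform probability measure on the nonnegative multi-sphere $\mathbb S^{n_1-1}_+\times\dots\times\mathbb S^{n_p-1}_+$; this makes the normalization free of charge, since $g\equiv 1$ on the support, but the positive definiteness then needs a two-step argument: a multi-form whose square integrates to zero must vanish on the nonnegative multi-sphere, hence by multi-homogeneity on all of $\mathbb R^{n_1}_+\times\dots\times\mathbb R^{n_p}_+$, hence identically, because the Zariski closure of the nonnegative orthant is the whole space. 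Your choice of Lebesgue measure on an open box $U$ in the strictly positive orthant reverses the trade-off: positive definiteness becomes elementary (a polynomial vanishing on a nonempty open set is the zero polynomial, and the entries of $\mathbf x^{[\tau]}$ are distinct monomials, hence linearly independent), and entrywise positivity is immediate, but you must pay with an explicit rescaling $\mathbf y\mapsto\mathbf y/\langle\mathbf g,\mathbf y\rangle$ to satisfy $\langle\mathbf g,\mathbf y\rangle=1$ --- which is legitimate, since $\langle\mathbf g,\mathbf y\rangle>0$ and positive scaling preserves both the semidefinite and the nonnegative cone. Both routes are complete; yours is somewhat more self-contained in that it avoids the homogeneity and Zariski-density considerations entirely, while the paper's choice of the sphere keeps the construction aligned with the geometry of the feasible set of \eqref{nonnegative-problem-homo}.
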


\begin{proof}
Let $\lambda$ be the Lebesgue measure on $\mathbb S^{n_1-1}\times\dots\times\mathbb S^{n_p-1}$.
Let $\mu$ be the normalized standard measure over the nonnegative multi-sphere $S:=(\mathbb R_+^{n_1}\cap\mathbb S^{n_1-1})\times\dots\times(\mathbb R_+^{n_p}\cap\mathbb S^{n_p-1})$, also known as the uniform probability measure on $\mathbb S^{n_1-1}_+\times\dots\times\mathbb S^{n_p-1}_+$, defined as
\[
\mu(A):=\frac{1}{\lambda(S)}\lambda(A\cap S)\ \text{for any Borel set }A.
\]
Define
\[
y_{\alpha}:=\int\mathbf x^\alpha \operatorname{d}\mu\ \ \text{for all }\alpha\in \mathbb N^{n_1}_{d_1}\times\dots\times\mathbb N^{n_p}_{d_p}
\]
to be the truncated moment sequence of $\mu$. It is obvious that $\mathbf y> \mathbf 0$, and
\[
\langle\mathbf g,\mathbf y\rangle=\int g(\mathbf x)\operatorname{d}\mu = 1,
\]
since $g(\mathbf x)\equiv 1$ over the support $S$ of $\mu$.

For any $f(\mathbf x)\in\mathbb R[\mathbf x]_{\tau}$, we have
\[
\mathbf f^\mathsf{T}M(\mathbf y)\mathbf f = \int f(\mathbf x)^2\operatorname{d}\mu.
\]
Since the support of $\mu$ is the nonnegative orthant part of the multi-sphere, if $\mathbf f^\mathsf{T}M(\mathbf y)\mathbf f =0$, we then must have that
\[
f(\mathbf x)=0\ \text{for all }\mathbf x\in S:=(\mathbb R_+^{n_1}\cap\mathbb S^{n_1-1})\times\dots\times(\mathbb R_+^{n_p}\cap\mathbb S^{n_p-1}).
\]
Since $f$ is multi-homogeneous, we immediately have that
\[
f(\mathbf x)=0\ \text{for all }\mathbf x\in \mathbb R_+^{n_1}\times\dots\times \mathbb R_+^{n_p}.
\]
Note that $\mathbb R_+^{n_1}\times\dots\times \mathbb R_+^{n_p}$ is a set with the Zariski closure being the whole space $\mathbb R^{n_1}\times\dots\times \mathbb R^{n_p}$. We conclude that $f\equiv 0$. Thus, the matrix $M(\mathbf y)$ is positive definite.
\end{proof}

\begin{lemma}\label{lem:strict-dual}
There exists a scalar $\gamma$, a matrix $S\succ \mathbf 0$, and a matrix $T>\mathbf 0$ such that $f(\mathbf x)-\gamma g(\mathbf x)=\big(\mathbf x^{[\tau]}\big)^\mathsf{T}(S+T)(\mathbf x^{[\tau]})$, i.e., the linear conic problem \eqref{relaxation-dual} is strictly feasible.
\end{lemma}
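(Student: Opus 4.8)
The plan is to exhibit an explicit triple $(\gamma,S,T)$, reducing strict feasibility to two observations: that $g$ carries a \emph{positive definite} Gram matrix, and that $f$ carries \emph{some} symmetric Gram matrix. Once both are in hand, a sufficiently negative $\gamma$ will do all the work. First I would record a convenient representation of $g$. Writing the $\tau_j$-th power of the squared norm as $\big[(\mathbf x^{(j)})^\mathsf{T}\mathbf x^{(j)}\big]^{\tau_j}=\sum_{|\alpha^{(j)}|=\tau_j}{\tau_j\choose\alpha^{(j)}}(\mathbf x^{(j)})^{2\alpha^{(j)}}$, I observe that this single-group form equals $\big((\mathbf x^{(j)})^{[\tau_j]}\big)^\mathsf{T} D_j\,(\mathbf x^{(j)})^{[\tau_j]}$, where $D_j$ is the diagonal matrix whose entries are the strictly positive multinomial coefficients ${\tau_j\choose\alpha^{(j)}}$; hence $D_j\succ\mathbf 0$. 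Using $\mathbf x^{[\tau]}=(\mathbf x^{(1)})^{[\tau_1]}\otimes\dots\otimes(\mathbf x^{(p)})^{[\tau_p]}$ together with the Kronecker identity $(\otimes_j\mathbf u_j)^\mathsf{T}(\otimes_j D_j)(\otimes_j\mathbf u_j)=\prod_j\mathbf u_j^\mathsf{T} D_j\mathbf u_j$, this yields $g(\mathbf x)=\big(\mathbf x^{[\tau]}\big)^\mathsf{T} G\,\big(\mathbf x^{[\tau]}\big)$ with $G:=D_1\otimes\dots\otimes D_p$, a positive definite (indeed diagonal) matrix.

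Next I would produce any symmetric Gram matrix $F$ for $f$, i.e.\ $f(\mathbf x)=\big(\mathbf x^{[\tau]}\big)^\mathsf{T} F\,\big(\mathbf x^{[\tau]}\big)$. Such an $F$ exists because, by \eqref{coefficient-moment}, the expansion $\mathbf x^{[\tau]}\big(\mathbf x^{[\tau]}\big)^\mathsf{T}=\sum_\alpha A_\alpha\mathbf x^\alpha$ has pairwise orthogonal coefficient matrices (Proposition~\ref{prop:monomial}), each of which is nonzero since every $\alpha^{(i)}$ of degree $d_i=2\tau_i$ splits as a sum of two exponents of degree $\tau_i$. Setting $F:=\sum_\alpha \frac{\mathbf f_\alpha}{\langle A_\alpha,A_\alpha\rangle}A_\alpha$ then gives $\langle F,A_\beta\rangle=\mathbf f_\beta$ for every $\beta$, so $\big(\mathbf x^{[\tau]}\big)^\mathsf{T} F\,\big(\mathbf x^{[\tau]}\big)=\langle F,\sum_\beta A_\beta\mathbf x^\beta\rangle=\sum_\beta\mathbf f_\beta\mathbf x^\beta=f(\mathbf x)$, as required.

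With $F$ and $G$ in hand, I fix $T:=E$, the all-ones matrix, which is strictly positive entrywise, and set $S:=F-\gamma G-E$ for a scalar $\gamma$ to be chosen. By construction $\big(\mathbf x^{[\tau]}\big)^\mathsf{T}(S+T)\big(\mathbf x^{[\tau]}\big)=f(\mathbf x)-\gamma g(\mathbf x)$ as polynomials, so it remains only to arrange $S\succ\mathbf 0$. This is where the positive definiteness of $G$ is used decisively: by Weyl's inequality, for $\gamma<0$ one has $\lambda_{\min}(S)\geq\lambda_{\min}(F-E)+|\gamma|\,\lambda_{\min}(G)$, and since $\lambda_{\min}(G)>0$, choosing $\gamma<0$ with $|\gamma|$ large enough (any $|\gamma|>\max\{0,-\lambda_{\min}(F-E)\}/\lambda_{\min}(G)$ works) forces $\lambda_{\min}(S)>0$. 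I would then conclude that $(\gamma,S,T)$ with $S\succ\mathbf 0$ and $T=E>\mathbf 0$ is the desired strictly feasible point of \eqref{relaxation-dual}.

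The lemma is not deep, and there is no serious obstacle beyond bookkeeping; the one genuinely substantive point, which I would treat most carefully, is the positive definite representation $g=\big(\mathbf x^{[\tau]}\big)^\mathsf{T} G\,\big(\mathbf x^{[\tau]}\big)$ with $G\succ\mathbf 0$. The reason the diagonal matrix $G$ comes out \emph{strictly} positive definite rather than merely positive semidefinite is exactly that the multinomial coefficients are strictly positive, and this is what lets a sufficiently negative $\gamma$ dominate both the (possibly indefinite) Gram matrix $F$ of $f$ and the perturbation $E$ at once.
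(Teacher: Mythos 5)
Your proposal is correct and follows essentially the same route as the paper: represent $g$ by a diagonal Gram matrix with strictly positive (multinomial-coefficient) diagonal entries, hence positive definite, and then let a sufficiently negative $\gamma$ dominate any fixed Gram matrix of $f$ together with the entrywise-positive perturbation. The paper states this in two lines; your write-up merely makes explicit the details it leaves implicit (the construction of $F$ from the orthogonal $A_\alpha$'s, the choice $T=E$, and the Weyl eigenvalue bound), all of which are sound.
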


\begin{proof}
Note that there exists a nonnegative diagonal matrix $D$ such that
\[
g(\mathbf x) = \big(\mathbf x^{[\tau]}\big)^\mathsf{T}D(\mathbf x^{[\tau]})
\]
and the minimum diagonal element being one. Thus, $D\succ\mathbf 0$.
The result follows immediately if a sufficiently small $\gamma<0$ is chosen.
\end{proof}

\begin{proposition}\label{prop:solve}
Both \eqref{relaxation-primal} and \eqref{relaxation-dual} are solvable, and there is no duality gap.
\end{proposition}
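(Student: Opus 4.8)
The plan is to treat \eqref{relaxation-primal} and \eqref{relaxation-dual} as a matched primal--dual pair of linear conic programs and to invoke the strong duality theorem for conic programming, whose hypothesis is the Slater (strict feasibility) condition on both sides. The underlying cone is the \emph{doubly nonnegative cone} $\mathcal K:=\{X : X\succeq \mathbf 0,\ X\geq \mathbf 0\}$ in the space of symmetric matrices of order $\nu(\tau,n_1,\dots,n_p)$, which is a closed convex cone, being the intersection of the positive semidefinite cone and the cone of entrywise nonnegative matrices. Under the moment map $\mathbf y\mapsto M(\mathbf y)$, the feasible set of \eqref{relaxation-primal} is $\{\mathbf y : M(\mathbf y)\in\mathcal K,\ \langle \mathbf g,\mathbf y\rangle=1\}$, while the cone $\Sigma^+_{\mathbf d,n_1,\dots,n_p}$ appearing in \eqref{relaxation-dual} corresponds, through the adjoint pairing, to the dual cone $\mathcal K^*=\{S+T : S\succeq\mathbf 0,\ T\geq\mathbf 0\}$. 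Thus the two problems form a genuine conic primal--dual pair.

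First I would record weak duality. For any primal-feasible $\mathbf y$ and any dual-feasible $\gamma$, the membership $\mathbf f-\gamma\mathbf g\in\Sigma^+_{\mathbf d,n_1,\dots,n_p}$ together with $M(\mathbf y)\in\mathcal K$ gives $\langle \mathbf f-\gamma\mathbf g,\mathbf y\rangle\geq 0$ by the cone/dual-cone pairing, whence $\langle\mathbf f,\mathbf y\rangle\geq \gamma\langle\mathbf g,\mathbf y\rangle=\gamma$. Taking the infimum over $\mathbf y$ and the supremum over $\gamma$ yields that $f_{\text{dnn}}$ dominates the dual optimal value. In particular each problem is bounded: the primal is bounded below by any dual-feasible $\gamma$, and the dual is bounded above by any attained primal objective value.

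Next I would apply the strict feasibility results already in hand. Lemma~\ref{lem:strict} furnishes a $\mathbf y$ with $M(\mathbf y)\succ\mathbf 0$ and $M(\mathbf y)>\mathbf 0$, i.e. $M(\mathbf y)\in\operatorname{int}\mathcal K$, so the primal satisfies Slater's condition; Lemma~\ref{lem:strict-dual} furnishes $\gamma$, $S\succ\mathbf 0$, $T>\mathbf 0$ with $\mathbf f-\gamma\mathbf g=(\mathbf x^{[\tau]})^\mathsf{T}(S+T)(\mathbf x^{[\tau]})$, placing the dual slack in the interior of $\mathcal K^*$, so the dual satisfies Slater's condition as well. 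Now I invoke the conic strong duality theorem in both directions: strict feasibility of the dual together with boundedness forces the primal \eqref{relaxation-primal} to attain its optimum with no duality gap, and strict feasibility of the primal together with boundedness forces the dual \eqref{relaxation-dual} to attain its optimum with no duality gap. Combining the two conclusions shows that both problems are solvable and that their optimal values coincide.

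I do not expect a serious obstacle, since the two Slater conditions are supplied by the preceding lemmas; the work is essentially bookkeeping to cast \eqref{relaxation-primal}--\eqref{relaxation-dual} into the standard conic primal--dual template and to confirm the pairing that yields weak duality. The one point warranting care is the identification of the dual cone of the doubly nonnegative cone: in general $(\,\text{PSD}\cap\text{Nonneg}\,)^*$ is only the \emph{closure} of $\text{PSD}+\text{Nonneg}$, so I would either check that the sum $\{S+T : S\succeq\mathbf 0,\ T\geq\mathbf 0\}$ is already closed here, or simply note that the strong duality theorem I invoke requires only that the primal cone $\mathcal K$ be closed (which it is) together with Slater's condition, so this closure subtlety does not affect the conclusion.
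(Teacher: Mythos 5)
Your proof is correct and follows essentially the same route as the paper: the paper's proof likewise consists of citing Lemmas~\ref{lem:strict} and \ref{lem:strict-dual} for strict feasibility of \eqref{relaxation-primal} and \eqref{relaxation-dual} and then invoking standard duality theory for linear conic optimization (citing \cite{BT-01}). Your extra steps---weak duality for boundedness, the two-directional application of the conic duality theorem, and the remark on closedness of the sum of the semidefinite and nonnegative cones---are exactly the details the paper delegates to that standard theory.
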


\begin{proof}
Both  \eqref{relaxation-primal} and \eqref{relaxation-dual} have strictly feasible solutions by  Lemmas~\ref{lem:strict} and \ref{lem:strict-dual} respectively.
 The conclusion then follows from standard duality theory for linear conic optimization problems (cf.\ \cite{BT-01}).
\end{proof}

\begin{proposition}[Exact Relaxation]\label{prop:exact}
Let $d_i=2\tau_i$ for all $i=1,\dots,p$. If \eqref{relaxation-primal} has an optimal solution $\mathbf y^*$ such that
\begin{equation}\label{rank-one}
\operatorname{rank}(M(\mathbf y^*)) = 1,
\end{equation}
then the relaxation is tight, i.e., $f_{\min} = f_{\text{\rm dnn}}$, and an optimal solution for \eqref{nonnegative-problem} can be extracted from $\mathbf y^*$.
\end{proposition}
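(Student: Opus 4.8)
The plan is to turn the rank-one hypothesis into an explicit feasible point of \eqref{nonnegative-problem} whose objective value equals $f_{\text{dnn}}$. Since the zero-th order DNN relaxation is a lower bound, $f_{\text{dnn}}\le f_{\min}$ (Proposition~\ref{prop:convergence-dnn}), it suffices to produce $\hat{\mathbf u}=(\hat{\mathbf u}^{(1)},\dots,\hat{\mathbf u}^{(p)})$ feasible for \eqref{nonnegative-problem} with $f(\hat{\mathbf u})=\langle\mathbf f,\mathbf y^*\rangle=f_{\text{dnn}}$; then $f_{\text{dnn}}=f(\hat{\mathbf u})\ge f_{\min}\ge f_{\text{dnn}}$ forces $f_{\min}=f_{\text{dnn}}$ and makes $\hat{\mathbf u}$ optimal. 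So the whole proof reduces to decoding $\mathbf y^*$ as a positive multiple of the truncated moment vector of a Dirac measure seated at a nonnegative point of the multi-sphere.

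First I would exploit both conic constraints on $M(\mathbf y^*)$. Positive semidefiniteness together with $\operatorname{rank}(M(\mathbf y^*))=1$ gives $M(\mathbf y^*)=\mathbf v\mathbf v^\mathsf{T}$ for a vector $\mathbf v$ indexed, like $\mathbf x^{[\tau]}$, by multi-exponents $\beta\in\mathbb N^{n_1}_{\tau_1}\times\dots\times\mathbb N^{n_p}_{\tau_p}$. Entrywise nonnegativity of $M(\mathbf y^*)$ means $v_\beta v_{\beta'}\ge 0$ for all $\beta,\beta'$, so no two nonzero entries of $\mathbf v$ have opposite signs; replacing $\mathbf v$ by $-\mathbf v$ if necessary I may assume $\mathbf v\ge\mathbf 0$. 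By Proposition~\ref{prop:monomial} the double nonnegativity also gives $\mathbf y^*\ge\mathbf 0$. Reading off the entries of the moment matrix yields the Hankel (consistency) relations $y^*_{\beta+\beta'}=v_\beta v_{\beta'}$; in particular $v_\beta v_{\beta'}=v_\gamma v_{\gamma'}$ whenever $\beta+\beta'=\gamma+\gamma'$.

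The technical heart is to deduce from these multiplicative relations that $\mathbf v$ is a product of powers, i.e.\ $\mathbf v=c\,\mathbf u^{[\tau]}$ for some $c>0$ and some $\mathbf u=(\mathbf u^{(1)},\dots,\mathbf u^{(p)})$; equivalently $y^*_\alpha=c^2\mathbf u^\alpha$ for every $\alpha\in\mathbb N^{n_1}_{d_1}\times\dots\times\mathbb N^{n_p}_{d_p}$ (every such $\alpha$ splits as $\beta+\beta'$ with $|\beta^{(i)}|=|(\beta')^{(i)}|=\tau_i$, and the Hankel relations make the value independent of the splitting). On the support $\{\beta:v_\beta>0\}$ this follows by taking logarithms: the additive identity $\log v_\beta+\log v_{\beta'}=\log v_\gamma+\log v_{\gamma'}$ on each degree slice forces $\log v_\beta$ to be the restriction of a linear functional of $\beta$, whence $v_\beta=c\prod_{i,j}(u^{(i)}_j)^{\beta^{(i)}_j}$ with $u^{(i)}_j>0$, while the same relations force $v_\beta=0$ as soon as $\beta$ charges a coordinate off the support, matching a vanishing coordinate of $\mathbf u$. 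I expect this to be the main obstacle: it is essentially the assertion that a rank-one doubly nonnegative moment matrix is the moment matrix of a single point mass, a finite analogue of the flat-extension theory of Curto--Fialkow, and the bookkeeping of the vanishing coordinates is what needs care. Finally, since $y^*_\alpha=c^2\mathbf u^\alpha\ge 0$, replacing each $\mathbf u^{(i)}$ by its componentwise absolute value leaves every moment $y^*_\alpha$ unchanged, so I may take $\mathbf u\ge\mathbf 0$.

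It remains to normalize and evaluate. Writing $g(\mathbf x)=(\mathbf x^{[\tau]})^\mathsf{T}D\,\mathbf x^{[\tau]}$ with $D\succ\mathbf 0$ diagonal (Lemma~\ref{lem:strict-dual}), the constraint $\langle\mathbf g,\mathbf y^*\rangle=1$ reads $\mathbf v^\mathsf{T}D\mathbf v=c^2 g(\mathbf u)=c^2\prod_{i=1}^p\|\mathbf u^{(i)}\|^{2\tau_i}=1$, which in particular forces every $\mathbf u^{(i)}\ne\mathbf 0$. Set $\hat{\mathbf u}^{(i)}:=\mathbf u^{(i)}/\|\mathbf u^{(i)}\|\in\mathbb S^{n_i-1}_+$, a feasible point of \eqref{nonnegative-problem}. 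Using $y^*_\alpha=c^2\mathbf u^\alpha$ and the multi-homogeneity of $f$ of degree $d_i=2\tau_i$ in $\mathbf x^{(i)}$,
\[
\langle\mathbf f,\mathbf y^*\rangle=c^2\sum_\alpha f_\alpha\mathbf u^\alpha=c^2 f(\mathbf u)=c^2\Big(\prod_{i=1}^p\|\mathbf u^{(i)}\|^{2\tau_i}\Big)f(\hat{\mathbf u})=f(\hat{\mathbf u}),
\]
the last equality by the normalization. Hence $f_{\text{dnn}}=f(\hat{\mathbf u})$, and the sandwich $f_{\text{dnn}}\le f_{\min}\le f(\hat{\mathbf u})=f_{\text{dnn}}$ gives $f_{\min}=f_{\text{dnn}}$ with $\hat{\mathbf u}$ an optimal solution of \eqref{nonnegative-problem} extracted from $\mathbf y^*$.
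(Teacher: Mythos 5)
Your argument is correct and ends in the same place as the paper's, but it takes a genuinely different route at the crux. The paper simply cites the moment-theory literature (Curto--Fialkow; Parrilo--Sturmfels) to assert that a rank-one doubly nonnegative moment matrix is a monomial outer product, writing $M(\mathbf y^*)=\mathbf x_*^{[\tau]}\big(\mathbf x_*^{[\tau]}\big)^\mathsf{T}$ at the outset; it then uses $M(\mathbf y^*)\geq \mathbf 0$ to make each block $\mathbf x_*^{(i)}$ sign-definite and the evenness $d_i=2\tau_i$ to replace $\mathbf x_*$ by $|\mathbf x_*|$ without changing $\mathbf y^*$, leaving the normalization and the final optimality argument implicit (``the results follow''). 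You instead prove that crux from scratch: entrywise nonnegativity applied to the rank-one factor gives $\mathbf v\geq\mathbf 0$ after one global sign flip (cleaner than the paper's group-by-group sign discussion), and the Hankel identities $v_\beta v_{\beta'}=v_\gamma v_{\gamma'}$ for $\beta+\beta'=\gamma+\gamma'$ are upgraded, via logarithms on the support, to $\mathbf v=c\,\mathbf u^{[\tau]}$ --- precisely the rank-one instance of the flat-extension fact the paper quotes. What your route buys is self-containedness and an explicit endgame (normalization via $\langle\mathbf g,\mathbf y^*\rangle=1$, evaluation, and the sandwich $f_{\text{dnn}}\leq f_{\min}\leq f(\hat{\mathbf u})=f_{\text{dnn}}$), which the paper compresses into one sentence; what the paper's route buys is brevity and a pointer to results that also cover ranks larger than one. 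The one caveat on your side is that the combinatorial heart is sketched rather than executed: you still owe (i) saturation of the support --- if $v_\beta,v_{\beta'}>0$ and $\gamma+\gamma'=\beta+\beta'$ with $\gamma,\gamma'$ valid exponents, then $v_\gamma,v_{\gamma'}>0$, whence by single-unit exchange moves the support is the full degree slice on the charged coordinates --- and (ii) that the additive relations on that slice force $\log v_\beta$ to be affine in $\beta$ (the increments $\log v_{\beta-e^{(i)}_j+e^{(i)}_k}-\log v_\beta$ are independent of the base point and satisfy a cocycle identity, hence come from a potential, which yields the coordinates $u^{(i)}_j$). Both claims are true and go through exactly along the lines you indicate, so this is detail to be filled in rather than a gap in the approach.
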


\begin{proof}
It follows from \cite{Curto-Fialkow:rank,PS-03} that $\mathbf y^*$ is a monomial vector in this situation. Let
\[
M(\mathbf y^*) = \mathbf x^{[\tau]}_*\big(\mathbf x^{[\tau]}_*\big)^\mathsf{T}
\]
with $\mathbf x_*=(\mathbf x^{(1)}_*,\dots,\mathbf x^{(p)}_*)$. Then, we have from $M(\mathbf y^*)\geq \mathbf 0$ that
\[
\mathbf x^{(i)}_*\geq \mathbf 0\;\; \text{or }\;\; \mathbf x^{(i)}_*\leq \mathbf 0
\]
for each $i=1,\dots,p$. Since each $d_i$ is even, the monomial vector $\mathbf z^*$ with
\[
M(\mathbf z^*) = \mathbf w^{[\tau]}_*\big(\mathbf w^{[\tau]}_*\big)^\mathsf{T}\ \text{and }\mathbf w_*=(|\mathbf x^{(1)}_*|,\dots,|\mathbf x^{(p)}_*|)
\]
satisfies that
\[
\mathbf y^*=\mathbf z^*.
\]
Therefore, the results follow.
\end{proof}

We will see from later numerical experiments that \eqref{rank-one} is a \textit{typical property}, i.e., it holds 
with high probability
if we randomly generate $f$ from a continuous probability distribution.

\subsection{DNN reformulation}\label{sec:dnn}
In this section, we formulate \eqref{relaxation-primal} as
a linear optimization problem over the cone of doubly nonnegative matrices more explicitly.
We shall replace
the variable vector $\mathbf y$ by exploiting
 the hidden constraints on the matrix $M(\mathbf y)$.
We have already show that the $\nu(\mathbf d,n_1,\dots,n_p) $ matrices (cf.\ \eqref{coefficient-moment})
\[
 A_\alpha : \alpha\in \mathbb N^{n_1}_{2\tau_1}\times\dots\times\mathbb N^{n_p}_{2\tau_p}
\]
are orthogonal to each other. Let
\[
\big\{ B_i : 1\leq i\leq \mu(\mathbf d,n_1,\dots,n_p):=\nu(\tau,n_1,\dots,n_p)(\nu(\tau,n_1,\dots,n_p)+1)/2 - \nu(\mathbf d,n_1,\dots,n_p) \big\}
\]
be the set of matrices that are orthogonal to each other such that
\[
\big\{A_\alpha : \alpha\in \mathbb N^{n_1}_{2\tau_1}\times\dots\times\mathbb N^{n_p}_{2\tau_p}\big\}\cup\big\{B_i :  i=1,\dots,\mu(\mathbf d,n_1,\dots,n_p) \big\}
\]
forms an orthogonal basis of the space of $\nu(\tau,n_1,\dots,n_p)\times \nu(\tau,n_1,\dots,n_p)$ real symmetric matrices. 
Let
\[
\mathbf w \in\mathbb R^{\nu(\mathbf d,n_1,\dots,n_p)}\text{ with } w_\alpha = \langle A_\alpha,A_\alpha\rangle\ \text{for all }\alpha.
\]
Then
the problem \eqref{relaxation-primal} can be equivalently reformulated as
\begin{equation}\label{relaxation-dnn}
\begin{array}{rrl}f_{\text{dnn}}:=& \min&\big\langle\sum_{\alpha\in\mathbb N^{n_1}_{2\tau_1}\times\dots\times\mathbb N^{n_p}_{2\tau_p}} \frac{f_\alpha}{w_\alpha}A_\alpha, X\big\rangle
\\[5pt]
&\text{s.t.}& \langle B_i,X\rangle = 0,\ i=1,\dots,\mu(\mathbf d,n_1,\dots,n_p),
\\[5pt]
&&\big\langle\sum_{\alpha\in\mathbb N^{n_1}_{2\tau_1}\times\dots\times\mathbb N^{n_p}_{2\tau_p}} \frac{g_\alpha}{w_\alpha}A_\alpha, X\big\rangle =1,
\\[8pt]
& & X\succeq \mathbf 0,\ X\geq\mathbf 0. \end{array}
\end{equation}
The optimization problem \eqref{relaxation-dnn} is classified as a doubly nonnegative (DNN) problem, since it requires the matrix variable $X$ to be both positive semidefinite and component-wisely nonnegative. As a linear conic problem, it can be reformulated as a standard semidefinite programming (SDP) problem introducing a
new variable $Y$ and add the constraints
that $X-Y=0$ so that the original doubly nonnegative conic constraint
can be replaced by $X\succeq 0$ and $Y\geq 0$. However, this
reformulation introduces too many new equality constraints which
not only make the resulting standard SDP problem computationally much
more expensive to solve but we are also likely to encounter numerical difficulties
when solving
the standard SDP reformulation since it is likely to be constraint degenerate (cf.\ \cite{ZST-10}). 

The next table gives some 
information
on the sizes of the DNN relaxation problem \eqref{relaxation-dnn} for different sizes $\mathbf d$ and $n_1,\dots,n_p$. When $d_i$ is odd, we use the technique in Section~\ref{sec:odd} to transform it into the standard formulation involving only even orders. In this table, \textbf{$\#\ \text{eq.}$} means the number of equality constraints, and \textbf{dim} means the dimension of the matrix variable. Except the first case of a quartic polynomial in $100$ variables, all the other cases are almost hopeless to solve at present \cite{YST-15}. On the other hand, all the cases are tensors with small to moderate dimensions, showing the difficulty of the problem \eqref{nonnegative-problem} from another perspective.
\begin{table}[h!]
\centering
\begin{tabular}{|c|c|}
\hline
$(\mathbf d,n_1,\dots,n_p)$ : (\# \text{eq.}; \text{dim}) & $(\mathbf d,n_1,\dots,n_p)$ : (\# \text{eq.}; \text{dim}) \\ \hline
   (4, 100)  : (8,332,501; 5,050)  & (4,150) : (42,185,626;  11,325)        \\\hline
  ((2,2), 100,100)  : (  24,502,501; 10,000)  & ((2,3),50,20) : (53,158,876;  11,550)        \\\hline
  ((2,2,2), 20,20,20)  : ( 22,743,001; 8,000)  & ((2,2,3),15,10,10) : (42,403,351;  9,900)        \\\hline
  ((2,2,2,2), 10,10,10,10)  : (40,854,376; 10,000)  & ((2,2,2,3),6,6,6,8) : ( 42,659,866;  9,720)\\ \hline
\end{tabular}
\caption{($\mathbf d,n_1,\dots,n_p$) : (number of equations; dimension of the matrix space) of \eqref{relaxation-dnn} for several $\mathbf d$'s and $(n_1,\dots,n_p)$'s}
\end{table}

\subsection{Worst case approximation bound}\label{sec:bound}

In this section, we present a worst case approximation bound for $f_{\text{dnn}}$.

Given a positive integer $n$, define the matrix $\Theta_n$ by
\[
\Theta_n:=\int_{\mathbb S^{n-1}_+}\mathbf x^{[n]}\big(\mathbf x^{[n]}\big)^\mathsf{T}\operatorname{d}\mu(\mathbf x),
\]
where $\mu(\mathbf x)$ is the uniform probability measure on $\mathbb S^{n-1}_+$.
It is easy to see that $\Theta_n$ is positive definite, since the set $\mathbb S^{n-1}_+$ is of dimension $n-1$ and the monomial vector $\mathbf x^{[n]}$ consists of homogeneous monomials. Let
\[
\delta_{n_1,\dots,n_p}:=\prod_{i=1}^p\sqrt{\lambda_{\min}(\Theta_{n_i})},
\]
where $\lambda_{\min}(\Theta_{n_i})$ is the smallest eigenvalue of the matrix $\Theta_{n_i}$.
Since each $\Theta_{n_i}$ is positive definite, we have that $\delta_{n_1,\dots,n_p}>0$.

 Since the set $\mathbb S^{n-1}_+$ is involved in this article instead of $\mathbb S^{n-1}$, $\lambda_{\min}(\Theta_{n_i})$ is different from those given in \cite[Table~1]{N-12}. For example,
\[
\Theta_2=\frac{1}{8\pi}\begin{bmatrix}3\pi& 4&\pi\\ 4&2\pi& 4\\ \pi&4& 3\pi\end{bmatrix}.
\]
Consequently, $\delta_{2}=\sqrt{\lambda_{\min}(\Theta_2)}=0.4849$, which is different from $0.5$ in \cite{N-12} with respect to $\mathbb S^{n-1}$.

Let $f_{\max}$ and $f_{\min}$ be the maximum and minimum values of the objective function $f$ over the feasible set of problem \eqref{nonnegative-problem}.
We then have the next proposition, whose proof is almost the same as that in \cite[Theorem~3.4]{N-12}.
\begin{proposition}\label{prop:bound}
Suppose that $n_i\geq d_i$ for all $i\in\{1,\dots,p\}$ and all notation are as above. Then we have that
\begin{equation}\label{eq:bound}
1 \; \leq\;
\frac{f_{\max}-f_{\text{dnn}}}{f_{\max}-f_{\min}}
\;\leq\; \frac{1}{\delta_{d_1,\dots,d_p}}\sqrt{{n_1\choose d_1}\dots{n_p\choose d_p}}.
\end{equation}
\end{proposition}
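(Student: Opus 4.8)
The plan is to establish the inequality \eqref{eq:bound} by relating the DNN optimal value $f_{\text{dnn}}$ to an averaged objective value over the nonnegative multi-sphere, mirroring the argument of \cite[Theorem~3.4]{N-12}. The left inequality is immediate: since the feasible set of \eqref{relaxation-primal} contains all moment vectors arising from genuine feasible points of \eqref{nonnegative-problem}, and $f_{\text{dnn}}$ is a relaxation, we have $f_{\text{dnn}}\le f_{\min}\le f_{\max}$, whence $(f_{\max}-f_{\text{dnn}})/(f_{\max}-f_{\min})\ge 1$. The content of the proposition is therefore the right-hand bound, which quantifies how loose the relaxation can be.

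For the upper bound, I would let $\mathbf y^*$ be an optimal solution of \eqref{relaxation-primal}, so that $\langle\mathbf f,\mathbf y^*\rangle=f_{\text{dnn}}$ and $M(\mathbf y^*)\succeq\mathbf 0$, $M(\mathbf y^*)\ge\mathbf 0$, $\langle\mathbf g,\mathbf y^*\rangle=1$. The key device is to compare $M(\mathbf y^*)$ against the integral operator built from the uniform measure. First I would express the deviation $f_{\max}-f_{\text{dnn}}$ as a trace inner product $\langle f_{\max}\mathbf g-\mathbf f,\mathbf y^*\rangle$, using $\langle\mathbf g,\mathbf y^*\rangle=1$, and recognize this as $\langle C, M(\mathbf y^*)\rangle$ for the coefficient matrix $C$ of the nonnegative multi-form $f_{\max}g-f$. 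The crucial estimate is a bound of the form
\[
\langle C, M(\mathbf y^*)\rangle \;\le\; \frac{1}{\delta_{d_1,\dots,d_p}}\sqrt{{n_1\choose d_1}\dots{n_p\choose d_p}}\,(f_{\max}-f_{\min}),
\]
obtained by sandwiching: since $\Theta_{n_i}\succeq\lambda_{\min}(\Theta_{n_i})I$ and these matrices factor through a product structure, the positive definiteness of $\Theta_{n_1}\otimes\dots\otimes\Theta_{n_p}$ controls $M(\mathbf y^*)$ from below, so that the extremal value of $\langle C,\cdot\rangle$ over the DNN cone intersected with the normalization $\langle\mathbf g,\cdot\rangle=1$ is bounded by the oscillation $f_{\max}-f_{\min}$ scaled by $\delta_{n_1,\dots,n_p}^{-1}$ and the combinatorial factor $\sqrt{\prod_i\binom{n_i}{d_i}}$. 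The hypothesis $n_i\ge d_i$ is exactly what makes each binomial coefficient well-defined and the monomial basis span the relevant space, so that $\Theta_{n_i}$ remains invertible on the coordinate subspace of interest.

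The main obstacle, and the step I expect to require the most care, is the passage from the integral matrix $\Theta_{n_i}$ to a usable spectral comparison with $M(\mathbf y^*)$. The difficulty is that $M(\mathbf y^*)$ is a moment matrix of degree $\tau_i=d_i/2$ in each block, while $\Theta_{n_i}$ is defined in terms of degree-$n_i$ monomial vectors $\mathbf x^{[n_i]}$; one must correctly align the degrees and the tensor-product structure across the $p$ groups of variables, and verify that the worst-case ratio is attained when $M(\mathbf y^*)$ concentrates as a rank-one matrix at an extremizer of $f$. I would handle this by following the normalization in \cite{N-12} verbatim for a single sphere and then taking the tensor product over $i=1,\dots,p$, so that $\delta_{n_1,\dots,n_p}=\prod_i\sqrt{\lambda_{\min}(\Theta_{n_i})}$ emerges as the product of the per-block constants and $\sqrt{\prod_i\binom{n_i}{d_i}}$ counts the dimension of each block's monomial space. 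Once the single-sphere estimate is in place, the multi-sphere bound follows by multiplicativity of both $\delta$ and the binomial product, since the objective $f$ is multi-homogeneous and the measure $\mu$ is a product measure over the blocks.
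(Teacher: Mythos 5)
Your opening moves are fine, and they do match the argument the paper intends (the paper prints no proof, deferring entirely to \cite[Theorem~3.4]{N-12}): $f_{\text{dnn}}\le f_{\min}$ gives the left inequality, and with $\mathbf y^*$ optimal for \eqref{relaxation-primal} and $\langle\mathbf g,\mathbf y^*\rangle=1$ one has $f_{\max}-f_{\text{dnn}}=\langle f_{\max}\mathbf g-\mathbf f,\mathbf y^*\rangle=:\langle\mathbf F,\mathbf y^*\rangle$. The genuine gap is in your ``crucial estimate.'' The assertion that the positive definiteness of $\Theta_{n_1}\otimes\dots\otimes\Theta_{n_p}$ ``controls $M(\mathbf y^*)$ from below'' is false: a feasible $M(\mathbf y^*)$ can be rank one (the moment matrix of a point mass on the nonnegative multi-sphere), and a rank-one positive semidefinite matrix dominates no positive multiple of a positive definite matrix. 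In the correct argument $\Theta$ never interacts with $M(\mathbf y^*)$ at all; it acts on the coefficient vector of the polynomial $F:=f_{\max}g-f$. The chain is vector Cauchy--Schwarz, $\langle\mathbf F,\mathbf y^*\rangle\le\|\mathbf F\|_2\,\|\mathbf y^*\|_2$, together with two separate facts: (i) $\|\mathbf y^*\|_2\le1$, because each entry $y^*_\alpha$ equals some entry $M(\mathbf y^*)_{\mu\nu}$ with $\mu+\nu=\alpha$, so $(y^*_\alpha)^2\le y^*_{2\mu}y^*_{2\nu}$, and summing over the (distinct) chosen pairs gives $\|\mathbf y^*\|_2^2\le\big(\sum_{\mu}y^*_{2\mu}\big)^2\le\langle\mathbf g,\mathbf y^*\rangle^2=1$, since $g$'s coefficient on every square monomial $\mathbf x^{2\mu}$ is at least one and $y^*_{2\mu}\ge0$; and (ii) $\|\mathbf F\|_2\le\delta_{d_1,\dots,d_p}^{-1}\sqrt{\prod_i\binom{n_i}{d_i}}\,\max_{S_+}F$, where $\max_{S_+}F=f_{\max}-f_{\min}$ because $g\equiv1$ on the nonnegative multi-sphere.

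Estimate (ii) is where the stated constants come from, and your account of them is also incorrect: $\binom{n_i}{d_i}$ is not ``the dimension of each block's monomial space'' (that would be $\binom{n_i+d_i-1}{d_i}$); it is the number of coordinate subsets $I\subseteq\{1,\dots,n_i\}$ with $|I|=d_i$. One proves (ii) by averaging the uniform measures on the coordinate sub-spheres $\mathbb S^{n_i-1}_+\cap\{x_j=0,\ j\notin I\}\cong\mathbb S^{d_i-1}_+$ over all such $I$: the moment matrix of this averaged measure is $\frac{1}{\binom{n_i}{d_i}}\sum_I P_I^\mathsf{T}\Theta_{d_i}P_I\succeq\frac{\lambda_{\min}(\Theta_{d_i})}{\binom{n_i}{d_i}}I$, because every monomial of degree $d_i$ is supported in at least one such $I$ --- this covering property, not ``well-definedness of the binomial coefficient,'' is the true role of the hypothesis $n_i\ge d_i$. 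Taking the product of these measures over the $p$ blocks and using $\int F^2\le(f_{\max}-f_{\min})^2$ yields (ii). Note, finally, that this mechanism produces exactly the constant $\delta_{d_1,\dots,d_p}$ appearing in \eqref{eq:bound}, built from the small matrices $\Theta_{d_i}$ in only $d_i$ variables and hence independent of the dimensions $n_i$; your sketch works throughout with $\Theta_{n_i}$ and $\delta_{n_1,\dots,n_p}$, so even if the spectral comparison were repaired it would prove a different, dimension-dependent inequality rather than \eqref{eq:bound}. The degree-alignment difficulty you flag (the matrix $M(\mathbf y^*)$ being indexed by degree-$\tau_i$ monomials) dissolves once $M(\mathbf y^*)$ and $\Theta$ are kept on opposite sides of the Cauchy--Schwarz inequality, which is precisely what the argument behind the paper's citation does.
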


With $\delta_2$ computed as above, we have that for a biquadratic form over the intersection of the multi-sphere and the nonnegative orthant
\[
1 \;\leq\; \frac{f_{\max}-f_{\textit{dnn}}}{f_{\max}-f_{\min}}\; \leq\;
 4.2535\sqrt{{n_1\choose 2}{n_2\choose 2}}.
\]
The upper bound is slightly different from that with respect to the multi-sphere, see \cite[Corollary~3.5]{N-12}.

If the polynomial is sparse, i.e., with fewer terms in its polynomial expansion, then an improved worst case approximation bound in terms of the number of monomials $\Omega(f)$ can be derived as in \cite[Section~4]{N-12}. In particular, if the polynomial is a monomial or the number of monomials is bounded by a constant, then a constant worst case approximation bound, independent of the problem dimensions, can be given.

\subsection{Solution extraction for even order tensors}\label{sec:solution-even}
Let $\mathbf y^*$ be an optimal solution for \eqref{relaxation-primal}. By Proposition~\ref{prop:monomial}, $\mathbf y^*\geq \mathbf 0$.
Let
\[
y^*_{2\gamma}:=\max\{y^*_{2\mu} : \mathbf x^\mu\in\mathbf x^{[\tau]}\}.
\]
Since the set $\{y^*_{2\mu} : \mathbf x^\mu\in\mathbf x^{[\tau]}\}$ forms the diagonal elements of the positive semidefinite matrix $M(\mathbf y^*)$ and $\mathbf y^*\neq 0$, we have that
\[
y^*_{2\gamma}>0.
\]
Denote
\[
\gamma:=(\gamma^1,\dots,\gamma^p)
\]
with
\[
\gamma^i:=(\gamma^i_1,\dots,\gamma^i_{n_i})
\]
for all $i=1,\dots,p$. Then $\gamma^i\neq\mathbf 0$ for all $i=1,\dots,p$. Let
\[
\gamma^i_{k_i}:=\max\{\gamma^i_1,\dots,\gamma^i_{n_i}\}.
\]
Define
\begin{eqnarray*}
\mathbf z^{(i)}_*  &:=&(y^*_{\gamma+(\gamma^1,\dots,\gamma^{i-1},\gamma^i-\mathbf e^{(i)}_{k_i}+\mathbf e^{(i)}_1,\gamma^{i+1},\dots,\gamma^p)},\dots,y^*_{\gamma+(\gamma^1,\dots,\gamma^{i-1},\gamma^i-\mathbf e^{(i)}_{k_i}+\mathbf e^{(i)}_{n_i},\gamma^{i+1},\dots,\gamma^p)})^\mathsf{T}
\\[5pt]
\mathbf x^{(i)}_* &:=&
|\mathbf z^{(i)}_*|/\|\mathbf z^{(i)}_*\| \ \ \text{for all }i=1,\dots,p.
\end{eqnarray*}
The approximation solution is then
\[
\mathbf x_*=(\mathbf x^{(1)}_*,\dots,\mathbf x^{(p)}_*),
\]
and the approximation value is
\[
f_{\text{app}}:=f(\mathbf x^{(1)}_*,\dots,\mathbf x^{(p)}_*).
\]
If $\operatorname{rank}(M(\mathbf y^*))=1$, then it holds that (cf.\ Proposition~\ref{prop:exact})
\[
M(\mathbf y^*)=\mathbf x^{[\tau]}_*\big(\mathbf x^{[\tau]}_*\big)^\mathsf{T}.
\]

\subsection{Solution extraction for odd order tensors}\label{sec:solution-odd}
Let $\mathbf y^*$ be an optimal solution for \eqref{relaxation-primal}. Suppose that the tensor space is
$\operatorname{Sym}(\otimes^{\alpha_1}\mathbb R^{n_1})\otimes\dots\otimes\operatorname{Sym}(\otimes^{\alpha_p}\mathbb R^{n_p})$, and without loss of generality that $\alpha_1,\dots,\alpha_q$ are odd for some $q\leq p$. Let $\mathbf d=(\alpha_1+1,\dots,\alpha_q+1,\alpha_{q+1},\dots,\alpha_p)$. By the scheme in Section~\ref{sec:odd}, we have that
\[
\mathbf y^*\in\mathbb R^{\nu(\mathbf d,n_1+1,\dots,n_q+1,n_{q+1},\dots,n_p)}.
\]

Let
\[
y^*_{\gamma}:=\max\big\{y^*_{\mu} : \mu=\big((\mu^1,1),\dots,(\mu^q,1),\mu^{q+1},\dots,\mu^p\big)\ \text{with }\mu^i\in\mathbb N^{n_i}_{\alpha_i}\big\}.
\]
If $y^*_\gamma=0$, it follows from Section~\ref{sec:non-app} that zero is the best approximation solution, since in this case the optimal value of \eqref{relaxation-primal} is zero. In the following, we assume that
\[
y^*_{\gamma}>0.
\]
Denote
\[
\gamma:=(\gamma^1,\dots,\gamma^p)
\]
with
\[
\gamma^i:=(\gamma^i_1,\dots,\gamma^i_{n_i},1)
\]
for all $i=1,\dots,q$, and
\[
\gamma^i:=(\gamma^i_1,\dots,\gamma^i_{n_i})
\]
for all $i=q+1,\dots,p$. Let
\[
\gamma^i_{k_i}:=\max\{\gamma^i_1,\dots,\gamma^i_{n_i}\}.
\]
Define
\begin{eqnarray*}
\mathbf z^{(i)}_* &:=& (y^*_{\gamma+(\gamma^1,\dots,\gamma^{i-1},\gamma^i-\mathbf e^{(i)}_{k_i}+\mathbf e^{(i)}_1,\gamma^{i+1},\dots,\gamma^p)},\dots,y^*_{\gamma+(\gamma^1,\dots,\gamma^{i-1},\gamma^i-\mathbf e^{(i)}_{k_i}+\mathbf e^{(i)}_{n_i+1},\gamma^{i+1},\dots,\gamma^p)})^\mathsf{T}\\[5pt]
\tilde{\mathbf x}^{(i)}_* &:=&
|\mathbf z^{(i)}_*|/\|\mathbf z^{(i)}_*\| \ \ \text{for all }i=1,\dots,q.
\end{eqnarray*}
and
\begin{eqnarray*}
\mathbf z^{(i)}_* &:=& (y^*_{\gamma+(\gamma^1,\dots,\gamma^{i-1},\gamma^i-\mathbf e^{(i)}_{k_i}+\mathbf e^{(i)}_1,\gamma^{i+1},\dots,\gamma^p)},\dots,y^*_{\gamma+(\gamma^1,\dots,\gamma^{i-1},\gamma^i-\mathbf e^{(i)}_{k_i}+\mathbf e^{(i)}_{n_i},\gamma^{i+1},\dots,\gamma^p)})^\mathsf{T}\\[5pt]
 \mathbf x^{(i)}_* &:=&
 |\mathbf z^{(i)}_*|/\|\mathbf z^{(i)}_*\| \ \ \text{for all }i=q+1,\dots,p.
\end{eqnarray*}
The approximation solution for the extended problem is then
\[
\tilde{\mathbf x}_*=(\tilde{\mathbf x}^{(1)}_*,\dots,\tilde{\mathbf x}^{(q)}_*,\mathbf x^{(q+1)}_*,\dots,\mathbf x^{(p)}_*).
\]
Let
\[
\tilde{\mathbf x}^{(i)}_*=(\mathbf x^{(i)},t_i) \ \text{for }i=1,\dots,q.
\]
For $i=1,\dots,q$, if $t_i\neq 1$, then we take
\[
\mathbf x^{(i)}_*:=\mathbf x^{(i)}/\|\mathbf x^{(i)}\|.
\]
Otherwise, we conclude that the
best approximating nonnegative rank-one tensor is the zero tensor.

\section{Numerical Experiments}\label{sec:numerical}
In this section, we present some preliminary numerical experiments for solving problem~\eqref{nonnegative-problem} via the DNN relaxation method developed in Section~\ref{sec:homo}.
All the tests were conducted on a Dell PC with 4GB RAM  and 3.2GHz CPU running 64bit Windows operation system. All codes were written in {\sc Matlab} with some subroutines in
C++. All the linear matrix conic problems were solved by SDPNAL+ \cite{YST-15}.
\subsection{Best nonnegative rank-one approximation of tensors}
\label{sec:numerical-best}

In this section, computational results for numerous instances of the best nonnegative rank-one approximation of tensors will be presented.
The tested tensors are taken from the literature.

Given a tensor $\mathcal A$, we use $f_{\text{dnn}}$ to denote the optimal value of the corresponding DNN relaxation problem.
The approximation solution $\mathbf x$ of problem \eqref{nonnegative-problem} is extracted according to Sections~\ref{sec:solution-even} and \ref{sec:solution-odd}. Then $\lambda\mathbf x^{\otimes \mathbf d}$ with $\lambda$ giving by \eqref{lambda-value} is the best nonnegative rank-one approximation found. Therefore, $f_{\text{app}}:=\lambda$ is the approximate optimal value of \eqref{nonnegative-problem} found by the method.
We use the relative approximation error
\[
\textbf{appr}(\mathcal A):=\frac{|f_{\text{dnn}}-f_{\text{app}}|}{\max\{1,f_{\text{dnn}}\}},
\]
and the relative approximation error with respect to the problem data size
\[
\textbf{apprnm}(\mathcal A):=\frac{|f_{\text{dnn}}-f_{\text{app}}|}{\max\{1,\|\mathcal A\|\}},
\]
to measure the approximation quality.
Note that due to the accuracy tolerance (the default is $10^{-6}$) set in solving the DNN relaxation
problem of \eqref{nonnegative-problem},
even if the matrix $M(\mathbf y^*)$ for the optimal $\mathbf y^*$ of \eqref{relaxation-primal} has rank one (thus the approximation is tight), we may still have $f_{\text{dnn}}\neq f_{\text{app}}$. But their difference should have the same magnitude as the
accuracy tolerance used.

Numerically, we regard the relaxation to be tight (e.g., when $\operatorname{rank}(M(\mathbf y^*))=1$) whenever the second largest singular value of $M(\mathbf y^*)$ is smaller than $1.0\times 10^{-6}$.

\begin{example}\label{exm-1}
This example comes from
\cite[Example~2]{De-De-Van:bes}. This is a tensor $\mathcal A$ in $\operatorname{Sym}(\otimes^3\mathbb R^2)$ with the independent entries being
\begin{align*}
&a_{111}= 1.5578, a_{2 2 2}= 1.1226, a_{112}= -2.4443, a_{221}= -1.0982.
\end{align*}
The relaxation is tight.
The best nonnegative rank-one approximation tensor found is
\[
\lambda =  1.5578 ,\ \mathbf x_*=( 1,  0)^\mathsf{T}.
\]
The errors $\textbf{apperr}(\mathcal A)=3.5924\times 10^{-6}$, and  $\textbf{apperrnm}(\mathcal A)=1.1142\times 10^{-6}$.
\end{example}

\begin{example}\label{exm-2}
This example comes from
 \cite[Example~3]{De-De-Van:bes}. This is a tensor $\mathcal A$ in $\otimes^4\mathbb R^2$ with nonzero entries being
\begin{align*}
&a_{1111}= 25.1, a_{1 21 2}= 25.6, a_{2121}= 24.8, a_{2222}= 23.
\end{align*}
This is a nonnegative and nonsymmetric tensor. The best nonnegative rank-one approximation tensor is the best rank-one approximation tensor (cf.\ \cite{Qi-Com-Lim:non}), which is found as
\[
\lambda = 25.6000,\ \mathbf x_*^1=\mathbf x_*^3=(1,0)^\mathsf{T},\ \mathbf x_*^2=\mathbf x_*^4=(0,1)^\mathsf{T}.
\]
The errors $\textbf{apperr}(\mathcal A)=9.1676\times 10^{-6}$, and  $\textbf{apperrnm}(\mathcal A)= 4.7616\times 10^{-6}$. The numerical computation is consistent\footnote{We remark on the different errors obtained in our computation and that in \cite{Nie-Wan:non}. Actually, Nie and Wang reported smaller approximation error. This is due to the facts that: (i) the SDP solvers are different (SDPNAL vs. SDPNAL+). Our formulation has an extra nonnegative constraint on the matrix variable. Although the SDPs have the same optimal values, the numerical computations adopt different termination accuracy tolerances. (ii) When we extract the solution for $\mathbf x_*$, we
also take the absolute values to make sure that $\mathbf x_*\geq 0$. This will introduce another difference. } with \cite[Example~3.11]{Nie-Wan:non}.

There is also a slight variation of $\mathcal{A}$, i.e., the tensor $\mathcal B$ in $\otimes^4\mathbb R^2$ with nonzero entries being (cf.\ \cite{De-De-Van:bes})
\begin{align*}
&b_{1111}= 25.1, b_{1 21 2}= 25.6, b_{2121}= 24.8, b_{2222}= 23, b_{1121} = 0.3, b_{2111}=0.3.
\end{align*}
The best nonnegative rank-one approximation tensor is
\[
\lambda = 25.6000,\ \mathbf x_*^1=\mathbf x_*^3=(1,0)^\mathsf{T},\ \mathbf x_*^2=\mathbf x_*^4=(0,1)^\mathsf{T},
\]
the same as that for $\mathcal A$.
The errors $\textbf{apperr}(\mathcal B)= 2.395\times 10^{-6}$, and  $\textbf{apperrnm}(\mathcal B)= 1.2439\times 10^{-6}$.
We see that $\textbf{apperr}(\mathcal B)$ is smaller than $\textbf{apperr}(\mathcal A)$. This is because there are more positive entries in $\mathcal B$ than those in $\mathcal A$, which improves the numerical computation and stability.
\end{example}

\begin{example}\label{exm-3}
This example comes from \cite[Example~2]{Qi:rat}. This is a tensor $\mathcal A$ in $\operatorname{Sym}(\otimes^3\mathbb R^3)$ with the independent entries being
\begin{align*}
&a_{111}= 0.0517, a_{ 1 1 2}= 0.3579, a_{1 1 3}= 0.5298, a_{ 1 2 2}= 0.7544, a_{ 1 2 3}= 0.2156,\\
& a_{1 3 3}= 0.3612, a_{2 2 2}= 0.3943, a_{2 2 3}= 0.0146, a_{2 3 3}= 0.6718, a_{3 3 3}= 0.9723.
\end{align*}
This is a also nonnegative tensor. The best nonnegative rank-one approximation tensor is the best rank-one approximation tensor, which is found as
\[
\lambda = 2.1110,\ \mathbf x_*=( 0.5204,  0.5113,  0.6839)^\mathsf{T}.
\]
The errors $\textbf{apperr}(\mathcal A)=3.261\times 10^{-6}$, and  $\textbf{apperrnm}(\mathcal A)=2.796\times 10^{-6}$. The numerical computation is consistent with \cite[Example~3.3]{Nie-Wan:non}.
\end{example}

\begin{example}\label{exm-4}
This example comes from \cite[Example~1]{Kof-Reg:bes}. It is a tensor $\mathcal A$ in $\operatorname{Sym}(\otimes^4\mathbb R^3)$ with the independent entries being
\begin{align*}
&a_{1111}= 0.2883, a_{ 11 1 2}= -0.0031, a_{11 1 3}= 0.1973, a_{ 11 2 2}= -0.2485, a_{ 11 2 3}=-0.2939,\\
& a_{11 3 3}=0.3847, a_{12 2 2}= 0.2972, a_{12 2 3}= 0.1862, a_{12 3 3}= 0.0919, a_{13 3 3}= -0.3619,\\
& a_{2222}= 0.1241, a_{2223}= -0.3420, a_{2233}= 0.2127, a_{2333}= 0.2727, a_{33 3 3}= -0.3054.
\end{align*}
The relaxation is not tight. The nonnegative rank-one approximation tensor found is
\[
\lambda = 0.6416,\ \mathbf x_*=(  0.9328,  0, 0.3603 )^\mathsf{T}.
\]
The errors $\textbf{apperr}(\mathcal A)= 5.8364\times 10^{-2}$, and  $\textbf{apperrnm}(\mathcal A)= 2.5910\times 10^{-2}$.
\end{example}

\begin{example}\label{exm-5}
This example comes from \cite[Example~3.6]{Kol-May:sft}. It is a tensor $\mathcal A$ in $\operatorname{Sym}(\otimes^3\mathbb R^3)$ with the independent entries being
\begin{align*}
&a_{111}= -0.1281, a_{ 11  2}= 0.0516, a_{11  3}=-0.0954, a_{ 1 2 2}= -0.1958, a_{ 12 3}=-0.1790,\\
& a_{1 3 3}=-0.2676, a_{2 2 2}= 0.3251, a_{2 2 3}= 0.2513, a_{2 3 3}= 0.1773, a_{3 3 3}= 0.0338.
\end{align*}
The relaxation is tight. The best nonnegative rank-one approximation tensor found is
\[
\lambda = 0.6187,\ \mathbf x_*=(  0,  0.8275,  0.5615 )^\mathsf{T}.
\]
The errors $\textbf{apperr}(\mathcal A)= 2.9194\times 10^{-6}$, and  $\textbf{apperrnm}(\mathcal A)=2.9194\times 10^{-6}$.
\end{example}

\begin{example}\label{exm-6}
This example comes from \cite[Example~3.8]{Nie-Wan:non}. It is a tensor $\mathcal A$ in $\operatorname{Sym}(\otimes^6\mathbb R^3)$ with the nonzero independent entries being
\begin{align*}
&a_{111111}= 2, a_{ 11112  2}= 1/3, a_{11 113 3}=2/5, a_{ 1 1222 2}= 1/3, a_{ 11223 3}=1/6,\\
& a_{1 1333 3}=2/5, a_{2 2 2222}= 2, a_{2 2223 3}= 2/5, a_{2 2333 3}= 2/5, a_{3 3 3333}=1.
\end{align*}
This is a nonnegative tensor, and the best nonnegative rank-one approximation tensor found is
\[
\lambda = 2,\ \mathbf x_*=(  0,  1,  0)^\mathsf{T}.
\]
The errors $\textbf{apperr}(\mathcal A)=2.2927\times 10^{-3}$, and  $\textbf{apperrnm}(\mathcal A)= 9.2979\times 10^{-4}$. The relaxation is not tight, but we know the
tensor found is the best nonnegative rank-one (cf.\ \cite[Example~3.3]{Nie-Wan:non}).
The numerical computation is consistent with \cite[Example~3.3]{Nie-Wan:non}.
\end{example}

\begin{example}\label{exm-7}
This example comes from \cite[Example~3.5]{Nie-Wan:non}. The tensor $\mathcal A\in\operatorname{Sym}(\otimes^m\mathbb R^n)$ with the entries being
\[
a_{i_1\dots i_m}=\sum_{j=1}^m\frac{(-1)^{i_j}}{i_j}
\]
The numerical computations are recorded in Table~\ref{table:exm-7}, in which \textbf{$\lambda$} is the norm of the computed best rank-one tensor, \textbf{Time} is the computation
 time taken in the format of \textit{hours:minutes:seconds}. In the table, the notation ``$8.9-6$" is a shorthand for ``$8.9\times 10^{-6}$" and so on so forth. We can see that in all cases, the method can find a very good best nonnegative rank-one approximation.

\begin{table}[h!]\caption{Computational results for Example~\ref{exm-7}
}\label{table:exm-7}
\centering
\begin{tabular}{|c | c | r | r | c | c| }
\hline
m & n &\multicolumn{1}{c|}{Time} & \multicolumn{1}{c|}{$\lambda$} & \textbf{apperr} & \textbf{apperrnm}\\ \hline
3 & 10 & 4.3 & 9.4878 & 8.9-6& 3.8-6\\ \hline
 3 & 20 & 29 & 19.2494 & 5.9-10& 2.5-10\\ \hline
 3 & 30 & 5:38 & 28.7246 & 3.0-5& 1.3-5\\ \hline
3&50&2:13:01&47.167&4.5-5&1.9-5\\  \thickhline
 4 & 10 & 1.4 & 33.4925 & 1.0-5& 4.4-6\\ \hline
 4 & 20 & 35 & 97.6098 & 2.5-8& 1.0-8\\ \hline
4 & 30 & 4:52 & 179.5584 & 5.7-9& 2.4-9\\ \hline
4&50&32:49&382.44&1.3-9&5.7-10\\
\thickhline
5 &  5 & 1 & 20.8284 & 1.1-5& 3.1-6\\ \hline
5 & 10 & 25 & 114.8631 & 1.5-8& 6.1-9\\ \hline
5&20&4:27:04&480.19&1.2-5&5.1-6
\\ \thickhline
6 &  5 & 0.59 & 46.6667 & 1.4-5& 3.4-6\\ \hline
6 & 10 & 13 & 386.0448 & 3.2-8& 1.2-8\\ \hline
6&20&2:32:18&2319.3&3.4-5&1.3-5 \\  \thickhline
7&5&6.4&103.02&7.8-6&1.6-6\\ \hline
7&10&11:29&1278.4&3.5-8&1.2-8\\  \thickhline
8&5&5.8&225.37&2.7-5&5.1-6\\ \hline
8&10&3:41&4186.1&1.2-7&4.4-8\\ \hline
\end{tabular}
\end{table}
\end{example}

\begin{example}\label{exm-8}
This example comes from \cite[Example~3.6]{Nie-Wan:non}. The tensor $\mathcal A\in\operatorname{Sym}(\otimes^m\mathbb R^n)$ with the entries being
\[
a_{i_1\dots i_m}=\sum_{j=1}^m\operatorname{arctan}\bigg(\frac{(-1)^{i_j}i_j}{n}\bigg).
\]
The numerical computations are recorded in Table~\ref{table:exm-8}.
\begin{table}[h!]\caption{Computational results for Example~\ref{exm-8}}\label{table:exm-8}
\centering
\begin{tabular}{|c | c | r | r | c | c |}
\hline
m & n
&\multicolumn{1}{c|}{Time} & \multicolumn{1}{c|}{$\lambda$}
& \textbf{apperr} & \textbf{apperrnm}\\ \hline
 3 & 10 & 6.6 & 21.1979 & 7.5-6& 5.5-6\\ \hline
 3 & 20 & 1:24 & 55.5867 & 5.5-6& 3.9-6\\ \hline
3&50&2:26:47&209.19&1.1-5&7.9-6\\
 \thickhline
 4 & 10 & 2.4  & 77.0689 & 7.5-6& 5.4-6\\ \hline
 4 & 20 & 44  & 282.9708 & 7.0-6& 4.8-6\\ \hline
4&50&2:56:48&1672.7&3.7-6&2.5-6\\ \thickhline
5 &  5 & 17  & 30.5470 & 1.2-3& 5.1-4\\ \hline
5 & 10 & 39  & 273.3958 & 2.7-5& 1.9-5\\ \hline
5&20&5:20:51&1407.8&1.7-9&1.1-9\\ \thickhline
6&10&21 &953.06&4.2-6&3.1-6\\ \hline
6&20&2:56:40&6890.5&1.3-9&9.0-10\\ \thickhline
7&5&44 &162.21&2.8-3&1.0-3\\ \hline
7&10&28:44 &3280&7.6-5&5.6-5\\ \thickhline
8&5&1.8&370.33&6.0-6&2.0-6\\ \hline
8&10&14:00&11178&4.4-5&3.2-5\\ \hline
\end{tabular}
\end{table}
\end{example}

\begin{example}\label{exm-9}
This example comes from \cite[Example~3.7]{Nie-Wan:non}. The tensor $\mathcal A\in\operatorname{Sym}(\otimes^m\mathbb R^n)$ with the entries being
\[
a_{i_1\dots i_m}=\sum_{j=1}^m (-1)^{i_j}\log(i_j).
\]
The numerical computations are recorded in Table~\ref{table:exm-9}.
\begin{table}[h!]\caption{Computational results for Example~\ref{exm-9}}\label{table:exm-9}
\centering
\begin{tabular}{|c | c | r | r | c | c |}
\hline
m & n
&\multicolumn{1}{c|}{Time} & \multicolumn{1}{c|}{$\lambda$}
& \textbf{apperr} & \textbf{apperrnm}\\ \hline
 3 & 10  &  21 & 68.0631 & 3.7-6& 2.7-6\\ \hline
  3 & 20  &  !:02 & 246.1904 & 9.7-6& 6.8-6\\ \hline
3&50&2:25:17&1289.8&1.3-5&8.8-6
\\ \thickhline
 4 & 10  &  2.3 & 248.2981 & 8.5-6& 6.2-6\\ \hline
 4 & 20  &  36 & 1253.3842 & 1.0-5& 7.2-6\\ \hline
4&50&2:12:51&10306&3.7-5&2.4-5\\
 \thickhline
 5 &  5  &  18& 69.8570 & 2.8-4& 1.3-4\\ \hline
5 & 10  &  1:10 & 883.2849 & 4.7-10& 3.4-10\\ \hline
5&20&5:00:10&6236.7&5.9-10&4.1-10\\ \thickhline
6&10&21&3086.6&2.1-5&1.6-5\\ \hline
6&20&3:43:30&30529&2.6-5&1.8-5\\ \thickhline
7&5&42&383.84&7.0-4&3.1-4\\ \hline
7&10&2:09:53&10645&1.1-4&8.6-5\\ \thickhline
8&5&2.1&889.93&7.0-6&3.0-6\\ \hline
8&10&14:09&36349&2.5-5&1.8-5\\ \hline
\end{tabular}
\end{table}
\end{example}

\begin{example}\label{exm-10}
This example comes from \cite[Example~3.10]{Nie-Wan:non}. The tensor $\mathcal A\in\operatorname{Sym}(\otimes^m\mathbb R^n)$ with the entries being
\[
a_{i_1\dots i_m}=\sin\big(\sum_{j=1}^mi_j\big).
\]
The numerical computations are recorded in Table~\ref{table:exm-10}.
\begin{table}[h!]\caption{Computational results for Example~\ref{exm-10}}\label{table:exm-10}
\centering
\begin{tabular}{|c | c | r | r | c | c |}
\hline
m & n
&\multicolumn{1}{c|}{Time} & \multicolumn{1}{c|}{$\lambda$}
& \textbf{apperr} & \textbf{apperrnm}\\ \hline
3 & 10  &  2.2 & 2.9121 & 5.2-1& 1.4-1\\ \hline
 3 & 15  &  4.8 & 5.6006 & 4.8-1& 1.3-1\\ \hline
3 & 20  &  28 & 10.2134 & 3.8-1& 9.9-2\\ \hline
3&50&57:24&44.541&2.9-1&7.3-2\\
 \thickhline
 4 & 10  &  1.5 & 8.0140 & 4.1-1& 7.8-2\\ \hline
4 & 15  &  4.3 & 22.0576 & 2.8-1& 5.4-2\\ \hline
4 & 20  &  14 & 21.9602 & 5.6-1& 1.0-1\\ \hline
4&50&48:15&158.22&4.9-1&8.9-2\\
\thickhline
5 &  5  &  0.68 & 2.7205 & 6.6-1& 1.3-1\\ \hline
5 & 12  &  1:36 & 17.3683 & 6.4-1& 9.1-2\\ \thickhline
6&10&24&22.85&6.6-1&6.4-2\\ \hline
6&20&2:18:31&103.43&8.2-1&8.3-2\\ \thickhline
\end{tabular}
\end{table}
\end{example}

\begin{example}\label{exm-11}
This example comes from \cite[Example~3.14]{Nie-Wan:non}. The tensor $\mathcal A\in\otimes^m\mathbb R^n$ with the entries being
\[
a_{i_1\dots i_m}=\cos\big(\sum_{j=1}^mj\cdot i_j\big).
\]
The numerical computations are recorded in Table~\ref{table:exm-11}.
\begin{table}[h!]\caption{Computational results for Example~\ref{exm-11}}\label{table:exm-11}
\centering
\begin{tabular}{|c | c | r | r | c | c| }
\hline
m & n
&\multicolumn{1}{c|}{Time} & \multicolumn{1}{c|}{$\lambda$}
& \textbf{apperr} & \textbf{apperrnm}\\ \hline
3 &  4  &  13 & 2.4412 & 3.7-3& 1.6-3\\ \hline
3 &  5  &  18 & 2.9581 & 4.3-2& 1.6-2\\ \hline
3 &  6  &  32 & 2.8464 & 2.5-1& 9.1-2\\ \thickhline
4 &  2  &  0.88 & 1.2392 & 9.4-6& 4.2-6\\ \hline
4 &  3  &  95 & 1.7608 & 9.7-2& 2.9-2\\ \thickhline
5 &  2  &  8.5 & 1.4061 & 1.0-5& 3.8-6\\ \hline
\end{tabular}
\end{table}
\end{example}

\begin{example}\label{exm-12}
This example comes from \cite[Example~3.16]{Nie-Wan:non}. The tensor $\mathcal A\in\otimes^m\mathbb R^n$ with the entries being
\[
a_{i_1\dots i_m}=\sum_{j=1}^m(-1)^{j+1}\cdot j\cdot \exp(-i_j).
\]
The numerical computations are recorded in Table~\ref{table:exm-12}.
\begin{table}[h!]\caption{Computational results for Example~\ref{exm-12}}\label{table:exm-12}
\centering
\begin{tabular}{|c | c | r | r | c | c| }
\hline
m & n
&\multicolumn{1}{c|}{Time} & \multicolumn{1}{c|}{$\lambda$}
& \textbf{apperr} & \textbf{apperrnm}\\ \hline
3 &  4  &  16 & 636.9974 & 3.3-6& 3.0-6\\ \hline
 3 &  5  &  1:08 & 2230.7114 & 3.7-6& 3.3-6\\ \hline
 3 &  6  &  4:24 & 7411.5508 & 4.7-6& 4.2-6\\
\thickhline
 4 &  2  &  1 & 16.0454 & 3.4-6& 8.5-7\\ \hline
 4 &  3  &  38 & 148.8945 & 2.5-6& 9.3-7\\ \thickhline
5 &  2  &  15 & 123.1144 & 5.3-6& 5.0-6\\ \hline
\end{tabular}
\end{table}
\end{example}

\begin{example}\label{exm-13}
This example comes from \cite[Example~3.18]{Nie-Wan:non}. The tensor $\mathcal A\in\otimes^m\mathbb R^n$ with the entries being
\[
a_{i_1\dots i_m}=\tan\bigg(\sum_{j=1}^m(-1)^{j+1}\cdot \frac{i_j}{j}\bigg).
\]
The numerical computations are recorded in Table~\ref{table:exm-13}.
\begin{table}[h!]\caption{Computational results for Example~\ref{exm-13}}\label{table:exm-13}
\centering
\begin{tabular}{|c | c | r | r | c | c| }
\hline
m & n
&\multicolumn{1}{c|}{Time} & \multicolumn{1}{c|}{$\lambda$}
& \textbf{apperr} & \textbf{apperrnm}\\ \hline
3 &  4  &  9.2 & 15.3005 & 3.3-6& 1.7-6\\ \hline
3 &  5  &  1:09 & 22.1109 & 1.2-1& 5.9-2\\ \hline
 3 &  6  &  2:52 & 22.0969 & 2.0-1& 8.2-2\\
\hline
 4 &  2  &  1.0 & 7.1928 & 1.0-5& 9.3-7\\ \hline
4 &  3  &  8.8 & 14.8249 & 1.4-4& 1.5-5\\ \thickhline
5 &  2  &  19 & 242.2146 & 1.6-6& 1.6-6\\ \hline
\end{tabular}
\end{table}
\end{example}

\begin{example}[Determinant Tensor/Levi-Civita Tensor]\label{exm-14}
This example considers the tensor $\mathcal A\in\otimes^n\mathbb R^n$ of the determinant for $n\times n$ matrices. Given a matrix $A=[a_{ij}]\in\mathbb R^{n\times n}$, its determinant is
\[
\operatorname{det}_n(A):=\sum_{\sigma\in\mathfrak G(n)}\text{sign}(\sigma)\prod_{i=1}^na_{i\sigma(i)}
\]
where $\mathfrak G(n)$
is  the permutation group
on $n$ elements and $\text{sign}(\sigma)$ is the sign of a permutation $\sigma$.
We can view
$\operatorname{det}_n(A)$ as a multilinear form over the groups of variables $\{a_{11},\dots,a_{1n}\}$, $\dots$, $\{a_{n1},\dots,a_{nn}\}$. Likewise, it can be uniquely regarded as a tensor in $\otimes^n\mathbb R^n$.
When $n=3$, the determinant tensor $\mathcal A\in \otimes^3\mathbb R^3 $ has the nonzero entries
\begin{align*}
&a_{123}=a_{231}=a_{312}=-a_{132}=-a_{213}=-a_{321}=1.
\end{align*}
This tensor is usually referred as the \textit{Levi-Civita symbol} \cite[Section~15.2]{Lim:hyp}. It captures the structure constants of the Lie algebra $\mathfrak{so}(3)$. In general, we call the tensor
$\mathcal A\in\otimes^n\mathbb R^n$ with the nonzero entries being
\[
a_{i_1\dots i_n}=\operatorname{sign}(\sigma)\ \text{if }i_j=\sigma(j)\ \text{for all }j=1,\dots,n
\]
as the \textit{Levi-Civita tensor} of order $n$. It is easy to see that the Levi-Civita tensor is the determinant tensor.

When $n=5$, the matrix dimension of the DNN relaxation problem is \textbf{$117649\times  117649$}, which is far too large for our computer's memory. Thus, we compute the problems up to $n=4$.
The approximations are all tight, and the computed best rank-one approximation tensor is
\[
\mathbf e_1\otimes\dots\otimes\mathbf e_n
\]
with $\mathbf e_i\in\mathbb R^n$ the $i$th standard basis vector.

The numerical computations are recorded in Table~\ref{table:exm-14}.
\begin{table}[h!]\caption{Computational results for Example~\ref{exm-14}}\label{table:exm-14}
\centering
\begin{tabular}{ |c | r | r | c | c |}
\hline
 n
 &\multicolumn{1}{c|}{Time} & \multicolumn{1}{c|}{$\lambda$}
 & \textbf{apperr} & \textbf{apperrnm}\\ \hline
 2 &   0.25  & 1.0000 & 1.6-6& 1.1-6\\ \hline
  3 &   0.47  & 1.0000 & 2.9-6& 1.2-6\\ \hline
 4 &   1:17 & 1.0000 & 9.5-6& 1.9-6\\ \hline
\end{tabular}
\end{table}
\end{example}

\begin{example}[Permanent Tensor]\label{exm-15}
This example considers the tensor $\mathcal A\in\otimes^n\mathbb R^n$ of the permanent for $n\times n$ matrices. Given a matrix $A=[a_{ij}]\in\mathbb R^{n\times n}$, its permanent is
\[
\operatorname{pet}_n(A):=\sum_{\sigma\in\mathfrak G(n)} \prod_{i=1}^na_{i\sigma(i)},
\]
$\operatorname{pet}_n(A)$ can be viewed as a multilinear form over the groups of variables $\{a_{11},\dots,a_{1n}\}$, $\dots$, $\{a_{n1},\dots,a_{nn}\}$. Likewise, it can be uniquely regarded as a tensor in $\otimes^n\mathbb R^n$. This tensor is nonnegative. It follows from \cite{Der:nuc} that
the best nonnegative rank-one approximation tensor has norm
\[
\lambda_* = \frac{n!}{n^{n/2}}.
\]
Similar to the determinant tensor, we can only handle the permanent tensor up to $n=4$. The relaxation is tight only for $n=2$. The computed best rank-one approximation tensor is
\[
\mathbf e_{\sigma(1)}\otimes\dots\otimes\mathbf e_{\sigma(n)}
\]
for all permutations $\sigma\in\mathfrak G(n)$, since the tensor is nonnegative.

The numerical computations are recorded in Table~\ref{table:exm-15}.
\begin{table}[h!]\caption{Computational results for Example~\ref{exm-15}}\label{table:exm-15}
\centering
\begin{tabular}{ |c | r | c | c | c | c| }
\hline
 n & \multicolumn{1}{c|}{Time}
  & $\frac{n!}{n^{n/2}}$
 &$\lambda$ & \textbf{apperr} & \textbf{apperrnm}\\ \hline
  2 &   0.21  & 1&1.0000 & 1.2-6& 8.8-7\\ \hline
  3 &   0.67  & 1.1547&1.0000 & 1.3-1& 6.3-2\\ \hline
 4 &   2:06&1.5 & 1.0000 & 3.3-1& 1.0-1\\ \hline
\end{tabular}
\end{table}
\end{example}

\begin{example}[Matrix Multiplication Tensor]\label{exm-16}
Given positive integers $m,n,q$, this example considers the tensor $\mathcal A\in\mathbb R^{mq}\otimes\mathbb R^{mn}\otimes\mathbb R^{nq}$
arising from the matrix multiplication of matrices of sizes $m\times n$ and $n\times q$. Given two matrices $A=[a_{ij}]\in\mathbb R^{m\times n}$ and $B=[b_{jk}]\in\mathbb R^{n\times q}$, their matrix multiplication $C=AB\in\mathbb R^{m\times q}$ is
\[
[c_{ik}]=\big[\sum_{j=1}^n a_{ij}b_{jk}\big].
\]
The bilinear map $\mathbb R^{mn}\times\mathbb R^{nq}\rightarrow \mathbb R^{mq}$ can be represented as a tensor $\mathcal A\in\mathbb R^{mq}\otimes\mathbb R^{mn}\otimes\mathbb R^{nq}$, such that
\[
C = \langle\mathcal A,A\otimes B\rangle_{2,3:1,2},
\]
where the equality is of course understood in the sense of the standard isomorphism and $\langle \cdot,\cdot\rangle_{2,3:1,2}$ is a tensor contraction by contracting the second and third indices of the first argument with the first and second indices of the second argument (cf.\ \cite{Lim:hyp}). This tensor is nonnegative.

The DNN relaxations are always tight. The computed best nonnegative rank-one tensor is
\[
\mathbf e_i\otimes\mathbf e_j\otimes\mathbf e_k
\]
for some standard basis vectors $\mathbf e_i\in\mathbb R^{mn}$, $\mathbf e_j\in\mathbb R^{nq}$, and $\mathbf e_k\in\mathbb R^{mq}$.

The numerical computations are recorded in Table~\ref{table:exm-16}.
The matrix multiplication tensor $\mathcal A$ is not invariant with respect to $m$, $n$ and $q$. However, we can
see from the table that when $\{m,n,q\}$ is a fixed set, the tensors share the same approximation errors. 
\begin{table}[h!]\caption{Computational results for Example~\ref{exm-16}}
\label{table:exm-16}
\centering
\begin{tabular}{| c | c | c | r | r | c | c |}
\hline
 m& n & q
 &\multicolumn{1}{c|}{Time} & \multicolumn{1}{c|}{$\lambda$}
 & \textbf{apperr} & \textbf{apperrnm}\\ \hline
  2 &  2 & 2 &  1.7 & 1.0000 & 3.6-6& 1.2-6\\ \thickhline
2 &  2 & 3 &  7.1 & 1.0000 & 2.9-6& 8.5-7\\
\hline
 3 &  2 & 2 &  6.9 & 1.0000 & 2.9-6& 8.5-7\\ \hline
 2 &  3 & 2 &  7.0 & 1.0000 & 2.9-6& 8.5-7\\ \thickhline
 2 &  3 & 3 &  43& 1.0000 & 3.5-6& 8.2-7 \\ \hline
 3 &  2 & 3 &  44 & 1.0000 & 3.5-6& 8.2-7\\ \hline
 3 &  3 & 2 &  34 & 1.0000 & 3.5-6& 8.2-7\\ \thickhline
 2 &  2 & 4 &  21 & 1.0000 & 2.3-6& 5.8-7\\ \hline
2 &  2 & 5 &  59 & 1.0000 & 2.0-6& 4.6-7\\ \hline
 2 &  2 & 6 &  2:14 & 1.0000 & 6.4-6& 1.3-6\\ \hline
2 &  2 & 7 &  5:52 & 1.0000 & 4.3-7& 8.1-8\\ \hline
2 &  2 & 8 &  11:43 & 1.0000 & 8.3-6& 1.4-6\\ \hline
 2 &  2 & 9 &  20:59 & 1.0000 & 2.3-6& 3.9-7\\ \thickhline
 2 &  3 & 4 &  2:25 & 1.0000 & 7.2-7& 1.4-7\\ \hline
 2 &  3 & 5 &  6:23 & 1.0000 & 1.7-6& 3.2-7\\
\hline
2 &  3 & 6 &  26:37 & 1.0000 & 2.6-6& 4.4-7\\
\thickhline
3 &  3 & 3 &  3:56 & 1.0000 & 4.5-6& 8.7-7\\ \hline
3 &  3 & 4 &  21:26 & 1.0000 & 2.4-6& 4.0-7\\
\hline
2 &  4 & 4 &  10:05 & 1.0000 & 2.5-6& 4.4-7\\ \hline
2 &  4 & 5 &  39:56& 1.0000 & 1.0-6& 1.6-7\\ \hline
\end{tabular}
\end{table}

\end{example}


\subsection{Copositivitiy of tensors}
In this section, we test some tensors for their copositivities.
Let $f_{\text{dnn}}$ be the optimal value of the DNN relaxation and $f_{\text{app}}$ be the approximation value found as before. Then
\begin{enumerate}
\item if $f_{\text{dnn}}\geq 0$, then we can conclude that the tensor is copositive,
\item if $f_{\text{app}}<0$, then we can conclude that the tensor is not copositive.
\end{enumerate}

\begin{example}\label{exm-18}
This example comes from \cite[Page~237]{Qi:cop}. It is a tensor $\mathcal A$ in $ \operatorname{Sym}(\otimes^3\mathbb R^3)$ with nonzero entries being
\begin{align*}
&a_{113}= 2, a_{223}=2, a_{123}=-1.
\end{align*}
It can be show that
\[
\langle\mathcal A,\mathbf x^{\otimes 3}\rangle=6x_3(x_1^2+x_2^2-x_1x_2),
\]
and hence $\mathcal A$ is copositive. We have that
\[
f_{\text{dnn}}=  9.3650\times 10^{-15},\ \text{and }f_{\text{app}}= 2.3094.
\]
Therefore we can conclude that the numerical computation gives the correct answer.
\end{example}

\begin{example}\label{exm-19}
This example comes from \cite[Theorem~10]{Qi:cop}. It is a tensor $\mathcal A \in \operatorname{Sym}(\otimes^m\mathbb R^n)$ such that
\[
a_{ii\dots i}\geq -\sum\{a_{ii_2\dots i_m} : (i,i_2,\dots,i_m)\neq (i,i,\dots,i)\ \text{and }a_{ii_2\dots i_m}<0\}\ \text{for all }i=1,\dots,n.
\]
Tensors satisfying the above assumption are always copositive. For each case, we randomly generate the tensor $\mathcal A \in \operatorname{Sym}(\otimes^m\mathbb R^n)$ and set
\[
a_{ii\dots i}=10^{-6}-\sum\{a_{ii_2\dots i_m} : (i,i_2,\dots,i_m)\neq (i,i,\dots,i)\ \text{and }a_{ii_2\dots i_m}<0\}\ \text{for all }i=1,\dots,n.
\]
We simulate \textbf{rep} times for each case, and use \textbf{prob} to denote the percentage
of the instances which are tested as copositive. From the theory, we know that \textbf{prob} should be one.
The numerical computations are recorded in Table~\ref{table:exm-19}.
\begin{table}[h!]\caption{Computational results for Example~\ref{exm-19}}
\label{table:exm-19}
\centering
\begin{tabular}{|c | c | c | c | c |  c| }
\hline
m & n & \textbf{rep} & Time (min;mean;max) & $f_{\text{dnn}}$ (min;mean;max) & \textbf{prob}\\ \hline
3 &  2 &100 &  0.11 ; 0.24;  1.1 & 0.0074;  0.5658 ; 1.4161 & 1.0000\\ \hline
3 &  4 & 100 &  0.20 ; 0.81;  13 & 0.9781;  1.9254 ; 3.2333 & 1.0000\\ \hline
4 &  4 & 100 &  0.21 ; 1.3;  41 & 1.7688;  3.4358 ; 5.5922 & 1.0000\\ \hline
4 & 10 &  20 &  2.4 ; 6.9;  13 & 30.8177;  35.9170 ; 41.1639 & 1.0000\\ \hline
\end{tabular}
\end{table}
\end{example}

\begin{example}[Random Examples]\label{exm-20}
We test randomly generated tensors to estimate the probability for the tensors to be copositive.
Each entry of the tensor is generated randomly uniformly from $[-1,1]$.
The numerical computations are recorded in Table~\ref{table:exm-20}.

\begin{table}[h!]\caption{Computational results for Example~\ref{exm-20}}\label{table:exm-20}
\centering
\begin{tabular}{|c | c | c | c | c |  c| }\hline
m & n & \textbf{rep} & Time (min;mean;max) & $f_{\text{dnn}}$ (min;mean;max) & \textbf{prob}\\ \hline
 3 &  2 & 100000 &  0.04 ; 0.22;  3:01 & -1.4627;  0.3318 ; 2.4426 & 0.7878\\ \hline
3 &  3 & 100 &  0.05; 0.41;  2.3 & -1.2454;  0.6822 ; 2.7819 & 0.7400\\ \hline
 3 &  4 & 100 &  0.21; 2.2;  2:28& -1.7732;  1.1233 ; 2.9112 & 0.8300\\ \thickhline
4 &  2 & 100 &  0.07; 0.21;  1.7& -0.7297;  0.7144 ; 2.4577 & 0.8800\\ \hline
 4 &  3 & 100 &  0.10; 0.37;  4.8 & -0.7667;  1.2694 ; 4.3429 & 0.9000\\ \hline
4 &  4 & 100 &  0.14; 0.52;  4.2 & -0.7705;  2.0959 ; 5.1505 & 0.9500\\ \thickhline
 5 &  2 & 100 &  0.06; 0.65;  15 & -1.3392;  0.7664 ; 3.9843 & 0.7900\\ \hline
 5 &  3 & 100 &  0.18; 3.7;  4:36 & -2.1679;  1.0422 ; 5.5006 & 0.6100\\ \hline
\end{tabular}
\end{table}
\end{example}

\subsection{Comparison with SDPNAL}\label{sec:comparison}
We have already mentioned before that the DNN problems can be formulated as standard SDP problems by introducing extra variables and constraints, and the
resulting standard SDP problems can also be solved directly by
the solver SDPNAL \cite{ZST-10}.
But we also know that
the new enhanced version SDPNAL+ is designed with the focus on solving DNN problems \cite{YST-15}. Thus, we would expect the latter solver to be
more efficient in solving our DNN problems \eqref{relaxation-dnn}.
To see this, finally, we have a comparison between the performance of SDPNAL and SDPNAL+ in solving Example \ref{exm-11}.
The results are recorded in Tables~\ref{table:exm-11-sdpnal} and \ref{table:exm-11-sdpnal+} respectively. We can see the superiority of SDPNAL+ in solving
large scale problems coming from the cases $m\in\{5,6,7\}$.

\begin{table}[h!]
\caption{SDPNAL on Example~\ref{exm-11}}\label{table:exm-11-sdpnal}
\centering
\begin{tabular}{| c | c | c | r | r | c | c |}  \hline
m & n &(\# sdp; \# con.)
 &\multicolumn{1}{c|}{Time} & \multicolumn{1}{c|}{$\lambda$}
& \textbf{apperr} & \textbf{apperrnm}\\ \hline
3&8&729; 441046&11:59&5.0068&0.12&0.04\\ \hline
3&10&1331; 1485397&2:27:36&6.9074&0.15&0.05\\ \hline
3&12&2197; 4075436&5:48:46&7.2059&0.27&0.09\\ \hline
4&4&625; 340626&3:52&3.1353&0.14&0.04\\ \hline
4&5&1296; 1486432&1:00:53&3.8602&0.33&0.10\\ \hline
4&6&2401; 5152547&10:33:23&3.8465&0.45&0.12\\ \hline
5&3&1024; 949601&1:32:58&1.1975&0.52&0.12\\ \hline
6&2&729; 485515&9:52&1.9076&7.5-8&2.5-8\\ \hline
7&2&2187; 4505221&5:56:23&2.5485&0.10&0.03\\   \hline
\end{tabular}
\end{table}
\begin{table}[h!]
\caption{SDPNAL+ on Example~\ref{exm-11}}\label{table:exm-11-sdpnal+}
\centering
\begin{tabular}{| c | c | c | r | r | c | c |} \hline
m & n &( \# sdp; \# con.)
&\multicolumn{1}{c|}{Time} & \multicolumn{1}{c|}{$\lambda$}
& \textbf{apperr} & \textbf{apperrnm}\\ \hline
3&8&729; 174961&15:04&5.0069&0.12&0.04\\ \hline
3&10&1331; 598951&1:52:06&6.9068&0.07&0.02\\ \hline
3&12&2197; 1660933&8:49:04&7.2052&0.26&0.09\\ \hline
4&4&625; 145001&3:32&3.1353&0.14&0.04\\ \hline
4&5&1296; 645976&1:13:09&3.8602&0.31&0.10\\ \hline
4&6&2401; 2268946&11:27:18&3.8465&0.45&0.12\\ \hline
5&3&1024; 424801&\textbf{36:00}&1.1974&0.50&0.11\\ \hline
6&2&729; 219430&\textbf{2:04}&1.9076&3.0-5&1.0-5\\ \hline
7&2&2187; 2112643&\textbf{55:40}&2.5485&0.10&0.03\\ \hline
\end{tabular}
\end{table}

\section{Conclusions}\label{sec:con}

This article studied the problem of minimizing a multi-form over the nonnegative multi-sphere. This problem is a special polynomial optimization problem. Although standard SOS relaxation method can be employed to solve this problem, there are
computational advantages to consider the more specialized approach in this paper. Take the biquadratic case for example, i.e., $d_1=d_2=2$. The matrix in the resulting SDP is of dimension 
$\frac{(n_1+n_2+1)(n_1+n_2)}{2}$. However, the matrix dimension in the DNN relaxation method introduced here is $n_1n_2$.
We can see that the latter method provides a linear matrix conic optimization problem with  matrix size that is about half of the former when $n_1=n_2$, and the ratio is even smaller when
$n_1 \ll n_2$ or $n_2\ll n_1$. Given the current limitations of SDP solvers for handling large scale problems with high-dimensional matrix variables (cf.\ \cite{ZST-10,SDPA,SDM,SDPT3}), the DNN method proposed in this article is promising.
Our approach is made practical by the
recent solver SDPNAL+ \cite{YST-15}, which is designed to
efficiently handle large-scale problems with a particular focus on DNN problems
having moderate matrix dimensions while allowing the
number of linear constraints to be very large.

The method is applied to the problem of finding the best nonnegative rank-one approximation of a given tensor and the problem of testing the
copositivity of a given tensor. Based on the promising numerical results, we
are motivated to carry out further investigations of the DNN relaxation methods for multi-form optimization over the nonnegative multi-sphere in the future.


\section*{Acknowledgement}
Most of the work was carried out during the first author's postdoctoral research in the Department of Mathematics at National University of Singapore.
The research of Shenglong Hu was supported in part by National Science Foundation of China (Grant No. 11771328), Young Elite Scientists Sponsorship Program by Tianjin, and Innovation Research Foundation of Tianjin University
(Grant Nos. 2017XZC-0084 and 2017XRG-0015). The research of Defeng Sun was supported in part by a start-up research grant from the Hong Kong Polytechnic University. The research of Kim-Chuan Toh was supported in part by the Ministry of Education, Singapore, Academic Research Fund under Grant R-146-000-257-112.
\bibliographystyle{model6-names}

\end{document}